 \renewcommand{\div}{\mathop{\mathrm{div}}\nolimits}
\newtheorem*{thm*}{Theorem A}
\newtheorem{thm}{Theorem}[section]
\newtheorem{dfn}{Definition}[section]
\newtheorem{lemma}{Lemma}[section]
\newtheorem{prop}{Proposition}[section]
\newtheorem{cor}{Corollary}[section]
\newtheorem*{conj*}{Conjecture A}
\newtheorem*{conj**}{Conjecture B}
 \numberwithin{equation}{section}
\begin{document}

\def\RR{{\mathbb{R}}}

\title{On nonlocal systems with jump processes of finite range and with decays}

\author{Mostafa Fazly}
\author{Changfeng Gui}

\address{Department of Mathematics, The University of Texas at San Antonio, San Antonio, TX 78249, USA}
\email{mostafa.fazly@utsa.edu}
\email{changfeng.gui@utsa.edu}

%\thanks{}

\maketitle

\begin{abstract}  
We study the following  system of  equations 
$$   L_i(u_i)    =   H_i(u_1,\cdots,u_m)  \quad  \text{in} \ \  \RR^n , $$ 
when $m\ge 1$, $u_i: \RR^n\to \RR$ and $H=(H_i)_{i=1}^m$ is a sequence of general nonlinearities.   The nonlocal operator $L_i$ is given by  
$$L_i(f (x)):=  \lim_{\epsilon\to 0} \int_{\RR^n\setminus B_\epsilon(x) }  [f(x) - f(z)] J_i(z-x) dz,$$
for a sequence of even, nonnegative and measurable jump kernels  $J_i$. We  prove a  Poincar\'{e} inequality for stable solutions of the above system for a general jump kernel $J_i$.  In particular, for the case of scalar equations, that is when $m=1$,  it reads 
\begin{equation*}\label{}
\iint_{  \mathbb R^{2n}}     \mathcal A_y(\nabla_x u)  [\eta^2(x)+\eta^2(x+y)] J(y) dx dy \le  
\iint_{  \mathbb R^{2n}} \mathcal B_y(\nabla_x u) [ \eta(x) - \eta(x+y) ] ^2 J(y) d x dy  , 
  \end{equation*} 
for any $\eta \in C_c^1(\mathbb R^{n})$ and for some  nonnegative 
$ \mathcal A_y(\nabla_x u)$ and $ \mathcal B_y(\nabla_x u)$. This is a counterpart of the celebrated inequality derived   by Sternberg and Zumbrun in \cite{sz} for semilinear elliptic equations  that is used extensively in the literature to establish De Giorgi type results, to study phase transitions and to prove regularity properties.  We then apply this inequality to finite range jump processes and to  jump processes with decays to  prove De Giorgi type results in two dimensions.   In addition, we show that whenever  $H_i(u)\ge 0$ or $\sum_{i=1}^m u_i H_i(u)\le 0$ then  Liouville theorems hold for each $u_i$ in one and two dimensions. Lastly, we provide certain  energy estimates under various assumptions on the jump kernel $J_i$ and a Liouville theorem for the quotient of partial derivatives of $u$.

\end{abstract}

\noindent
{\it \footnotesize 2010 Mathematics Subject Classification:} {\scriptsize  35J60, 60J75, 60J35, 35B35, 35B32. }\\
{\it \footnotesize Keywords: Nonlocal equations,  De Giorgi's conjecture,  stable solutions, integral inequalities,  energy estimates}. {\scriptsize }

\section{Introduction and main results}\label{secint}

The concepts of diffusion processes and elliptic differential operators are at the centre of the probability theory and partial differential equations theory, respectively. For an elliptic differential operator $L$ in $\mathbb R^n$, there is an associated  diffusion process $X$ such that $L$ is the infinitesimal generator of $X$ and vice versa. As an example, the infinitesimal generator of a isotropically symmetric $\alpha$-stable process in $\mathbb R^n$ with $0<\alpha<2$ is the fractional Laplacian operator $L=(-\Delta)^{\alpha/2}$ that is 
 \begin{equation}\label{Luj}
L u(x) = \lim_{\epsilon\to 0} \int_{\RR^n\setminus B_\epsilon(x) } [u(x) - u(z)] J(x,z) dz,
  \end{equation}   
for  the symmetric jumping kernel $J(x,z) =  \frac{c_{n,\alpha}}{|x-z|^{n+\alpha}}$, where $c_{n,\alpha}$ is a positive constant.  The above nonlocal operator with a measurable symmetric kernel 
 \begin{equation}\label{Jump}
J(x,z)= \frac{c(x,z)}{|x-z|^{n+\alpha}}, 
 \end{equation} 
 when a measurable symmetric function $c(x,z)$ is bounded between two positive constants,  $0<\lambda \le \Lambda$,   is studied expensively in the literature from both theory of partial differential equations and theory of probability points of view, see \cite{ber, bl, bass} and references therein.  Note that there exists a real-valued symmetric $\alpha$-stable jump process $X$ with the jumping kernel $J(x,z)$.  Occasionally, in this article we assume that $J(x,z)$ is restricted in the sense that jumps with size larger than certain number is removed that is 
 \begin{equation}\label{Jumpk}
J(x,z)= \frac{c(x,z)}{|x-z|^{n+\alpha}} \mathds{1}_{\{|x-z|\le \kappa\}}, 
 \end{equation} 
for some positive constant $\kappa$. For a constant $c(x,z)$, the associated $X$ is a finite range, or truncated,  isotropically symmetric $\alpha$-stable process with jumps less than $\kappa$.  From the probability theory point of view,  processes with finite range  jump kernels  are of interests in the literature. We refer interested readers to  \cite{bbck,ckk} and references therein for sharp heat kernel estimates, parabolic Harnack principles and weighted Poincar\'{e} inequalities  of fractional order.   Barlow, Bass and Gui in \cite{bbg} considered jump processes which are not as  restricted as the ones associated to (\ref{Jumpk}). Let $\theta,C>0$ and set 
 \begin{equation}\label{Jumpke}
J(x,z) = \frac{c(x,z)}{|x-z|^{n+\alpha}} \ \ \ \text{when} \ \ \ |x-z|\le \kappa  ,
 \end{equation} 
 and 
  \begin{equation}\label{Jumpker}
\int_{|x-z|>r} J(x,z) dz \le C e^{-\theta r}\ \ \ \text{when} \ \ \ r > \kappa. 
 \end{equation} 
These imply that jumps with size larger than certain number must decay exponentially. However,  jumps with  smaller size  are comparable to the ones in the unrestricted case (\ref{Jump}).     In this article, we study the following multi-component system of nonlocal equations 
 \begin{equation} \label{main}
   L_i(u_i)    =   H_i(u)  \quad  \text{in} \ \  \RR^n , 
  \end{equation}   
  when  $u=(u_i)_{i=1}^m$ for  $u_i\in C^1(\mathbb R^n)$ and  $|\nabla u_i|\in  L^\infty{\mathbb (R^n)}$ and $H=(H_i)_{i=1}^m$ is a sequence of locally Lipschitz functions.   The operator $L_i$ is an integral operator of convolution type 
  \begin{equation}\label{Lui}
L_i(u_i (x))= \lim_{\epsilon\to 0} \int_{\RR^n\setminus B_\epsilon(x) } [u_i(x) - u_i(z)] J_{i}(z-x) dz,  
\end{equation}
where each $J_i$ is a nonnegative measurable even jump kernel.  Inspired by (\ref{Jump}) and (\ref{Jumpk}) we consider these classes of jump kernels in this article, 
 \begin{equation}\label{Jumpi}
 J_i(x-z) =  \frac{c_i(x-z)}{|x-z|^{n+\alpha_i}}, 
 \end{equation} 
 and the finite range processes with jumps 
 \begin{equation}\label{Jumpki}
\frac{c_i(x-z)}{|x-z|^{n+\alpha_i}} \mathds{1}_{\{|x-z|\le \delta_i\}} \le J_i(x-z) \le  \frac{c_i(x-z)}{|x-z|^{n+\alpha_i}} \mathds{1}_{\{|x-z|\le \kappa_i\}}, 
 \end{equation} 
 when a measurable even function $c_i(x-z)$ is bounded between two positive constants $0<\lambda_i\le \Lambda_i$ and $0<\delta_i\le \kappa_i$ for all $1\le i\le m$. We also consider the following jump kernel with decays that is inspired by (\ref{Jumpke})-(\ref{Jumpker}), 
\begin{equation}\label{Jumpkei}
J_i(x,z) = \frac{c_i(x-z)}{|x-z|^{n+\alpha_i}} \ \ \ \text{when} \ \ \ |x-z|\le \kappa_i, 
 \end{equation} 
 and 
  \begin{equation}\label{Jumpkeri}
\int_{r<|x-z|<2r} |J_i(x,z) | dz \le C h_i(r)\ \ \ \text{when} \ \ \ r > \kappa_i,  
 \end{equation} 
where each $0\le h_i\in C(\mathbb R^+)$ with $\lim_{r\to\infty} h_i(r)=0$. We may assume certain decays rates, weaker than (\ref{Jumpker}),  on the sequence function $h=(h_i)_{i=1}^m$ in some of our proofs. 

\subsection{Scalar equations: $m=1$}
Ennio De Giorgi (1978) in \cite{De} proposed the following conjecture that has been a great inspiration for many authors in the field of partial differential equations and differential geometry.   
\begin{conj*} 
Suppose that $u$ is an entire solution of the Allen-Cahn equation that is 
\begin{equation}\label{allen}
\Delta u+u-u^3 =0	 \quad \text{in}\ \  \mathbb{R}^n, 
\end{equation}
satisfying $|u({x})| \le 1$, $\frac{\partial u}{\partial x_n} ({x}) > 0$ for ${x} = ({x}',x_n) \in \mathbb{R}^n$.	
Then, at least in dimensions $n\le 8$ the level sets of $u$ must be hyperplanes, i.e. there exists $g \in C^2(\mathbb{R})$ such that $u({x}) = g(a{x}' - x_n)$, for some fixed $a \in \mathbb{R}^{n-1}$.
\end{conj*}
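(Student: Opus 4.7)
The plan is to follow the variational-geometric strategy pioneered by Ghoussoub-Gui that marries stability with the Sternberg-Zumbrun Poincar\'e inequality, the classical local analogue of the inequality announced in the abstract. The first step is to upgrade the monotonicity assumption $\partial_{x_n} u > 0$ to a stability statement: differentiating \eqref{allen} yields $\Delta(\partial_{x_n} u) + (1-3u^2)\partial_{x_n} u = 0$, and the existence of this positive solution of the linearised equation implies, via a standard Picone manipulation, the stability inequality $\int |\nabla \phi|^2 \ge \int (1-3u^2)\phi^2$ for every $\phi \in C_c^1(\mathbb{R}^n)$.

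Next I would invoke the Sternberg-Zumbrun strengthening of stability: for any test function $\eta \in C_c^1(\mathbb{R}^n)$,
\begin{equation*}
\int_{\{\nabla u \ne 0\}} |\nabla u|^2 \bigl( |A|^2 + |\nabla_T \log|\nabla u||^2 \bigr)\,\eta^2\,dx \le \int |\nabla u|^2 |\nabla \eta|^2\,dx,
\end{equation*}
where $A$ is the second fundamental form of the level sets of $u$ and $\nabla_T$ is the tangential gradient along these level sets. In dimension $n=2$ combine this with the linear energy growth $\int_{B_R} |\nabla u|^2 \le C R$ (a consequence of Modica's monotonicity formula together with $|u|\le 1$) and the logarithmic cutoff $\eta_R(x) = \min\{1, \log(R/|x|)/\log \sqrt{R}\}$: the right-hand side tends to $0$ as $R \to \infty$, forcing $|A|\equiv 0$ and $\nabla_T \log|\nabla u| \equiv 0$. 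Thus each level set is a straight line and $|\nabla u|$ is constant on it, which combined with strict monotonicity in $x_n$ yields the one-dimensional symmetry $u(x) = g(ax' - x_n)$.

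For $n=3$ I would follow Ambrosio-Cabr\'e and reduce the problem to a weighted Liouville theorem: the quotients $\sigma_i = \partial_{x_i} u / \partial_{x_n} u$ (well-defined by monotonicity) satisfy $\div(u_{x_n}^2 \nabla \sigma_i) = 0$ in the weak sense, and the energy bound $\int_{B_R}(u_{x_n}\sigma_i)^2 \le C R^2$ is exactly what the degenerate-elliptic Liouville statement needs in order to conclude that each $\sigma_i$ is constant. For $4 \le n \le 8$ the stability-based machinery must be replaced by Savin's program: monotonicity forces $u$ to be a minimiser, sharp density estimates propagate to rescalings $u_R(x) = u(Rx)$, and classification of minimising minimal cones through dimension eight identifies the blow-down limit with a half-space indicator, from which one-dimensional symmetry is deduced by improvement of flatness.

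The main obstacle is the passage from $n=3$ to $n \le 8$: the proof must import deep regularity theory for minimal graphs (the Bernstein property in $\mathbb{R}^{n-1}$ for $n-1 \le 7$) into the phase-transition setting via $\Gamma$-convergence, and sharpness is essential because beyond $n=8$ the conjecture genuinely fails (del~Pino-Kowalczyk-Wei). A secondary obstacle already visible in the above plan is that the Sternberg-Zumbrun inequality by itself is purely geometric and loses its punch once the Dirichlet energy grows faster than $R^2$; this is precisely why new ingredients beyond the inequality announced in the abstract are required to make the argument dimension-sensitive.
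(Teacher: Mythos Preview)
This statement is De Giorgi's conjecture, presented in the paper purely as background and \emph{not} proved there; the paper says explicitly that ``the original conjecture remains open in dimensions $4\le n\le 8$.'' So there is no paper proof to compare against, and what you have written is in effect an attempt at a famous open problem.

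Your outline for $n\le 3$ is a faithful summary of the Ghoussoub--Gui and Ambrosio--Cabr\'e arguments and matches the history the paper recounts (the Sternberg--Zumbrun route you take for $n=2$ is the Farina--Sciunzi--Valdinoci variant, also discussed there). The genuine gap is in the range $4\le n\le 8$. Savin's theorem, as the paper itself stresses, is proved under the \emph{additional} hypothesis $\lim_{x_n\to\pm\infty} u(x',x_n)=\pm 1$, which is not among the assumptions of the conjecture. Your step ``monotonicity forces $u$ to be a minimiser'' gives only local minimality among compactly supported perturbations; without the asymptotic limits one cannot identify the blow-down of the level sets with a half-space, which is exactly the input Savin's improvement-of-flatness scheme needs, and it is also how the dimension count reaches $8$ rather than $7$. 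In short, the program you sketch for $4\le n\le 8$ is precisely the one everyone would like to complete, but it is currently open; carrying it out would settle the conjecture, not reproduce a proof from this paper.
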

Ghoussoub and Gui in \cite{gg1} provided the first comprehensive affirmative answer on the De Giorgi conjecture in two dimensions.   In fact their proof is valid for a general Lipschitz nonlinearity $H$  and not necessarily double-well potentials of the following form 
%For the  semilinear equation
 \begin{equation}\label{scalmainH}
 -\Delta u=H(u) \ \ \text{in} \ \ \ \RR^n. 
 \end{equation} 
  Their proof uses the following, by now  standard, linear Liouville type theorem for elliptic equations in the divergence form, which (only) holds in dimensions one  and two, see \cite{bar, bbg, gg1}. If $\phi>0$, then any solution $\sigma$ of 
\begin{equation}\label{divphisigma}
\div(\phi^2 \nabla \sigma)=0, 
\end{equation}
such that $\phi\sigma$ is bounded, is necessarily constant. This Liouville theorem is then applied to 
 the ratio $\sigma :=   \frac{\partial u}{\partial x_1}  / \frac{\partial u}{\partial x_2}  $ to finish the proof in two dimensions.  Ambrosio and Cabr\'{e} in  \cite{ac}, and later with Alberti in \cite{aac} for a general Lipschitz nonlinearity, provided a proof  for the conjecture  in three  dimensions by noting  that for the linear Liouville theorem to hold,  it suffices that 
\begin{equation}
\int_{B_{2R}\setminus B_R}\phi^2\sigma^2 \leq CR^2, 
\end{equation} 
for any $R>1$.   Then by showing that any solution $u$ such that $\partial_{x_n} u>0$ satisfies the energy estimate 
\begin{equation}
\int_{B_R}\phi^2\sigma^2 \leq CR^{n-1}, 
\end{equation} 
they finished the proof.    Even though the original conjecture remains open in dimensions $4\leq n \leq 8$,  there are various partial, yet groundbreaking, results for dimensions $4\leq n \leq 8$.  In this regard,  Ghoussoub and Gui showed in \cite{gg2} that the conjecture holds in  four and five dimensions for solutions that satisfy certain antisymmetry conditions, and Savin in \cite{sav} established its validity   for $4 \le n \le 8$ under the following additional natural hypothesis on the solution,
 \begin{equation}\label{asymp}
\lim_{x_n\to\pm\infty } u({x}',x_n) = \pm 1 \ \ \text{for} \ \ \ x'\in\mathbb R^{n-1}. 
\end{equation}
In \cite{wang}, Wang provided a new proof for the result of Savin that involves more variational methods in the spirit.     Unlike the above proofs in dimensions $n\leq 5$, the proof of Savin is non-variational and does not use a Liouville type theorem. We refer interested readers to \cite{bar,bbg,bcn,bhm} for more information regrading this Liouville theorem for local semilinear equations.     For the case of $n\ge 9$,  del Pino, Kowalczyk and Wei in \cite{dkw} provided an example that shows the eight dimensions is the critical dimension for the conjecture.   The classical De Giorgi's conjecture with the stability assumption, instead of monotonicity, is known as the {\it stability conjecture} or {\it generalized De Giorgi's conjecture}. 
\begin{conj**}\label{stconj}
Suppose that $u$ is a bounded and stable solution of the Allen-Cahn equation (\ref{allen}). Then,  the level sets are all hyperplanes, at least for dimensions $n \le 7$. 
\end{conj**}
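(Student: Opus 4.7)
My plan is to follow the variational program that resolved the monotone De Giorgi conjecture in low dimensions and push it to the merely stable setting. The cornerstone is the Sternberg-Zumbrun geometric Poincar\'e inequality, which follows from stability: for every $\eta \in C_c^1(\mathbb R^n)$,
\begin{equation}
\int_{\mathbb R^n} \eta^2 \bigl(|\nabla^2 u|^2 - |\nabla |\nabla u||^2\bigr)\, dx \;\le\; \int_{\mathbb R^n} |\nabla u|^2\, |\nabla \eta|^2\, dx.
\end{equation}
On regular level sets $\{u=c\}$ the left-hand integrand equals $\bigl(|A|^2 + |\nabla_T \log|\nabla u||^2\bigr)|\nabla u|^2$, with $A$ the second fundamental form. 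Combined with the Modica--Mortola style energy bound $\int_{B_R}(|\nabla u|^2+W(u))\,dx \le CR^{n-1}$, a logarithmic cutoff $\eta_R$ (equal to $1$ on $B_R$, vanishing outside $B_{R^2}$) controls the right-hand side by $CR^{n-1}/\log R$, which vanishes only when $n=2$. This already closes Conjecture B in two dimensions, and in fact yields a direct proof via the Liouville theorem (\ref{divphisigma}) applied to $\sigma = \partial_{x_1}u/\partial_{x_2}u$.

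In the range $3\le n\le 7$, I would pair the Poincar\'e inequality with a Savin-type improvement-of-flatness. First, blow down $u$ along $\varepsilon_k\to 0$ via $\Gamma$-convergence to extract a limit set whose boundary is a stable minimal hypersurface. Next, invoke the Simons rigidity theorem, valid precisely in those dimensions, to force this cone to be a hyperplane. Finally, transfer the rigidity back to $u$ through a blow-up/compactness argument in the spirit of \cite{sav,wang}, initialized by stability alone rather than by monotonicity plus the limit condition (\ref{asymp}). The Ambrosio--Cabr\'e refinement of the linear Liouville theorem -- where $\phi\sigma$ need only satisfy a quadratic energy bound on annuli -- would be the closing tool whenever the geometry reduces to a two-dimensional slice.

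The decisive obstacle is the first half of this blow-down: without (\ref{asymp}) or monotonicity, there is no a priori density estimate for $\{u>0\}$ and $\{u<0\}$, so the $\Gamma$-limit may be empty or degenerate. Producing such a density estimate from stability alone is precisely the gap that has kept Conjecture B open for $n\ge 4$, and it is where this plan would need a genuinely new ingredient -- for instance, combining the pointwise curvature information coming from the geometric Poincar\'e inequality across many scales simultaneously, or exploiting the nonlocal Poincar\'e inequality derived in the present paper to extract rigidity invisible to the purely local approach.
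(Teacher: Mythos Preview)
The statement you are addressing is \emph{Conjecture B}, not a theorem: the paper does not prove it and does not claim to. Immediately after stating it, the authors write that the conjecture ``is known to be true for $n=2$,'' is disproved for $n=8$ by Pacard--Wei, and ``remains open for dimensions $3\le n\le 7$.'' There is therefore no ``paper's own proof'' to compare your proposal against.

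To your credit, you recognize this: your closing paragraph correctly names the missing density estimate as the genuine obstruction, and you describe your text as a plan rather than a proof. Two refinements are worth flagging. First, the range of open cases is $3\le n\le 7$, not $n\ge 4$: the Ambrosio--Cabr\'e argument in three dimensions requires monotonicity to obtain the $R^{n-1}$ energy bound, and that bound is not known for merely stable solutions. So your invocation of the ``Modica--Mortola style energy bound $\int_{B_R}(|\nabla u|^2+W(u))\,dx \le CR^{n-1}$'' is already a gap in dimension $3$ --- that estimate is available for minimizers and for monotone solutions, but establishing it from stability alone is part of the open problem. Second, the Savin improvement-of-flatness machinery in \cite{sav,wang} is initialized not just by monotonicity but by the minimality it implies together with condition (\ref{asymp}); removing both simultaneously is a more severe relaxation than your sketch suggests.

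In short: your $n=2$ paragraph is fine and matches what the paper says is known; your $3\le n\le 7$ outline is a reasonable summary of why the problem is hard, but it is not a proof, and neither is anything in the paper.
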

The above conjecture  is known to be true for $n=2$ and not to be true in dimension $n=8$. For two dimensions the same proof given for monotone solutions in \cite{gg1} can be applied for the stability conjecture.  For eight dimensions,  we refer interested readers  to \cite{pw} by Pacard and Wei where they disproved the above conjecture. Note that the stability conjecture remains open for  dimensions $3\le n\le 7$.    In \cite{fsv} and references therein, Farina et al. applied the following geometric Poincar\'{e} type inequality,  given by Sternberg and Zumbrun in \cite{sz,sz1} for stable solutions of (\ref{scalmainH}),   to establish the De Giorgi's conjecture,  among other results,  in two dimensions 
\begin{equation}\label{farinapoin}
   \int_{  \{|\nabla u|\neq 0\}\cap \mathbb R^n}\left(   |\nabla u|^2 \mathcal{A}^2 + | \nabla_T |\nabla u| |^2  \right)\eta^2 \le \int_{\mathbb R^n}|\nabla u|^2   |\nabla \eta|^2, 
  \end{equation} 
for any $\eta \in C_c^1(\mathbb R^n)$ where $\nabla_T$ stands for the tangential gradient along a given level set of $u$ and  $\mathcal{A}^2$ for the sum of the squares of the principal curvatures of such a level set. The above inequality has been frequently used in the literature to establish symmetry results \cite{sv,fsv,mf}, to provide regularity of external solutions \cite{Cabre}, to analyze phase transitions \cite{sz1}, etc.  Consider  the case of scalar equations, when $m=1$ in (\ref{main}), that is  
 \begin{equation} \label{main1}
   L(u) =\lim_{\epsilon\to 0} \int_{\RR^n\setminus B_\epsilon(x) } [u(x) - u(z)] J(z-x) dz   =   H(u)  \quad  \text{in} \ \  \RR^n , 
  \end{equation}  
with an even, nonnegative and measurable kernel $J$. In this article, we prove a counterpart of the above inequality for stable solutions  
%Note that for scalar equations,  that is when  $m=1$,  the above inequality reads, 
   %Let $n\ge 1$ and $ u$ be a stable solution of (\ref{main}) when $m=1$.  Then, the following inequality holds,
\begin{equation}\label{poinsysm1}
\iint_{  \mathbb R^{2n}}     \mathcal A_y(\nabla_x u)  [\eta^2(x)+\eta^2(x+y)] J(y) dx dy \le  
\iint_{  \mathbb R^{2n}} \mathcal B_y(\nabla_x u) [ \eta(x) - \eta(x+y) ] ^2 J(y) d x dy  , 
  \end{equation} 
for any $\eta \in C_c^1(\mathbb R^{n})$ where 
\begin{eqnarray}\label{mathcalA}
\mathcal A_y(\nabla_x u)  &:=&  |\nabla_x u(x)|  |\nabla_x u(x+y)| -\nabla_x u(x)  \cdot \nabla_x u(x+y) , \\
\label{mathcalB}
 \mathcal B_y(\nabla_x u) &:= & |\nabla_x u(x)| | \nabla_x u(x+y)| .
 \end{eqnarray}
 Note that $\mathcal A_y(\nabla_x u)$  and $ \mathcal B_y(\nabla_x u) $ are nonnegative for all $x,y\in\mathbb R^n$. Note also that the nonlinearity $H$ does not appear in both (\ref{farinapoin}) and (\ref{poinsysm1}).   For the particular  kernel $ J(y) =  \frac{c_{n,\alpha}}{  |y|^{n+\alpha}} $, it is by now standard that the fractional Laplacian  operator,  can be denoted  as the Dirichlet-to-Neumann map for an extension function satisfying a higher order elliptic equation in the upper half space with one extra spatial dimension, see \cite{cas} by Caffarelli and Silvestre.  In the light of this extension function, a counterpart of (\ref{poinsysm1}) is given by Sire and  Valdinoci in \cite{sv}.  We also refer interested readers to Cinti and Ferrari in \cite{cf} for a counterpart of (\ref{poinsysm1}).   We then apply this inequality to prove one-dimensional symmetry results for stable solutions of (\ref{main}) in two dimensions. Our approach is different from the ones given very recently by    Hamel,  Ros-Oton,  Sire and Valdinoci in \cite{hrsv}. Authors in \cite{hrsv} studied the scalar equation (\ref{main1}) and, among other results, they proved one-dimensional symmetry of monotone and stable solutions.   Their proof relies on a Harnack inequality and a Liouville theorem for the quotient function $\sigma$. We would like to point out that under an additional assumption of the form of (\ref{asymp}), Savin in \cite{sav2} established the validity of the (weak form of) the
De Giorgi conjecture in a fractional framework. 
 
 \subsection{System of equations: $m\ge1$}

 Ghoussoub and the author in \cite{fg,mf}  considered  the case of system of semilinear equations, 
  \begin{equation}\label{gHm}
 -\Delta u_i=\partial_{i} H(u_1,\cdots,u_m) \ \ \text{in} \ \ \ \RR^n ,
 \end{equation} 
when $\partial_{i} H(u_1,\cdots,u_m)  =\partial_{u_i} H(u_1,\cdots,u_m)$.   Note that for a multi-component solution $u=(u_i)_{i=1}^m$  the concept of monotonicity needs to be adjusted accordingly.   We borrow the following definition from \cite{fg}.    

\begin{dfn}\label{Hmon} A solution $u=(u_k)_{k=1}^m$ of (\ref{main}) is said to be  $H$-monotone if the following hold,
\begin{enumerate}
 \item[(i)] For every $1\le i \le m$, each $u_i$ is strictly monotone in the $x_n$-variable (i.e., $\partial_{x_n} u_i\neq 0$).

\item[(ii)]  For all $i< j$,  we have 
  \begin{equation}\label{huiuj}
\hbox{$\partial_j H_i(u) \partial_n u_i(x) \partial_n u_j (x) > 0$  for all $x\in\mathbb {R}^n$.}
\end{equation}
\end{enumerate}
\end{dfn}
Note also that (ii) implies a combinatorial assumption on the sign of partial derivatives of $H_i$ and we call any system that admits such an assumption as {\it orientable} system.   In other words, for orientable systems there exists a sequence of sign functions $\tau=(\tau_i)_{i=1}^m$ where  $\tau_i\in \{-1,1\}$ such that $\partial_j H_i(u) \tau_i \tau_j >0$.   Here is the notion of stability.  
 \begin{dfn} \label{stable}
A solution $u=(u_i)_{i=1}^m$ of (\ref{main}) is called stable when there exists a sequence of  functions   $\phi=(\phi_i)_{i=1}^m$   such that each $\phi_i$ does not change sign. In addition,  $\phi$ satisfies   the following linearized equation
 \begin{equation} \label{L}
L_i (\phi_i)= \sum_{j=1}^m \partial_j H_i(u)  \phi_j     \ \ \ \text{in}\ \ \mathbb R^n, 
  \end{equation} 
  where   $\partial_j H_i(u)  \phi_j \phi_i >0 $ for all $i<j$ when $1\le i,j\le m$. 
\end{dfn} 
It is straightforward to see that any $H$-monotone solution is a stable solution via differentiating (\ref{main}) with respect to $x_n$ and defining $\phi_i=\partial_{x_n} u_i$.    The  next definition is the notion of symmetric systems, introduced in \cite{mf}. The concept of symmetric systems seems to be crucial for providing De Giorgi type results for system (\ref{main}) with a general nonlinearity. 
\begin{dfn}\label{symmetric} We call system (\ref{main}) symmetric if the matrix of  partial derivatives of all components of $H=(H_i)_{i=1}^m$ that is 
 \begin{equation}
 \mathbb{H}:=(\partial_i H_j(u))_{i,j=1}^{m}, 
 \end{equation}
 is symmetric. 
\end{dfn}
Authors in \cite{fg}, provided  the following geometric inequality for stable solutions of (\ref{gHm}) and then applied this to establish  the De Giorgi's conjecture in two dimensions,
\begin{eqnarray}\label{systempoin}
\sum_{i=1}^m   \int_{\Omega}|\nabla u_i|^2   |\nabla \eta_i|^2&\ge&\sum_{i=1}^m  \int_{ \{|\nabla u_i|\neq 0\}\cap\Omega}\left(   |\nabla u_i|^2 \mathcal{A}_i^2 + | \nabla_T |\nabla u_i| |^2  \right)\eta_i^2\\&&+\sum_{i\neq j}^m \int_{\Omega}  \left( \nabla u_i \cdot\nabla  u_j \eta_i^2 - |\nabla u_i| |\nabla u_j| \eta_i \eta_j\right)\partial_jH_{i}(u),
  \end{eqnarray} 
for any $\eta=(\eta_i)_{i=1}^m \in C_c^1(\Omega)$  and $\mathcal{A}_i^2$ stands for the sum of the squares of the principal curvatures of such a level set.   As it is shown in \cite{abg},  in the absence of $H$-monotonicity and stability there are in fact two-dimensional solutions for  Allen-Cahn systems in two dimensions.   Therefore, the concepts of $H$-monotonicity and stability seem to be crucial in the context.    In this article, we show that the following inequality holds for a stable solution $ u=(u_i)_{i=1}^m$ of (\ref{main}) with a general jump kernel $J=(J_i)_{i=1}^m$,
\begin{eqnarray}\label{poincarein1}
&&\frac{1}{2}\sum_{i=1}^m  \iint_{  \mathbb R^{2n} }   \mathcal A_y(\nabla_x u_i) [ \eta_i^2(x)+\eta^2_i(x+y)] J_i(y) dx dy
\\&& \label{poincarein2} +\sum_{i\neq j}^m \int_{  \mathbb R^{n} }   \partial_{ j} H_i( u)   \left[  |\nabla_{ x}  u_i(x)|  |\nabla_{ x}  u_j(x)| \eta_i(x) \eta_j(x)   -   \nabla_{ x}  u_i(x) \cdot   \nabla_{ x}  u_j(x) \eta_i^2(x) \right] dx
\\&\le&  \label{poincarein3} \frac{1}{2}  \sum_{i=1}^m  \iint_{   \mathbb R^{2n}  } \mathcal B_y(\nabla_x u_i)  \left[ \eta_i(x) - \eta_i(x+y) \right] ^2 J_i(y) d x dy,
  \end{eqnarray} 
   for any sequence of test functions $\eta=(\eta_i)_{i=1}^m$ for $\eta_i \in C_c^1(\mathbb R^{n})$ and $m,n\ge 1$. Note that  counterparts of this  Poincar\'{e} inequality for the fractional Laplacian operator, and the extension problem,  are provided  in \cite{csm,cs1,cs2, ccinti,ccinti1,sv,fs,mf2,dp} and references therein.      We then apply this inequality to establish De Giorgi type results for  stable solutions of (\ref{main}) in two dimensions with a general nonlinearity $H$.

Here is how this article is structured.  Section \ref{secpoin},   is devoted to proof of the Poincar\'{e} inequality (\ref{poincarein1})-(\ref{poincarein3}) for stable solutions with a general jump kernel $J$. In Section \ref{secDeG}, we provide De Giorgi type results  in two dimensions for finite rang jump kernels (\ref{Jumpki}) and for jump kernels with decays (\ref{Jumpkei})-(\ref{Jumpkeri}).  In addition, we show that under extra sign assumptions  $H_i(u)\ge 0$ or $\sum_{i=1}^m u_i H_i(u)\le 0$, Liouville theorems  hold for each $u_i$ in one and two dimensions. In Section \ref{secen}, we provide energy estimates for unrestricted jump kernels (\ref{Jumpi}) and restricted jump kernels (\ref{Jumpki}) and (\ref{Jumpkei})-(\ref{Jumpkeri}).  Lastly, we provide a Liouville theorem for the quotient of partial derivatives of $u$.  

\section{A Poincar\'{e} Inequality for stable solutions}\label{secpoin}

We start this section with the following technical lemma.  
\begin{lemma}\label{fgprop}
Assume that an operator $L$ is given by (\ref{Luj}) with  a measurable symmetric kernel  $J(x,z)=J(x-z)$  that is even.    Then, 
\begin{eqnarray}
&&L(f(x)g(x)) = f(x)L(g(x))+g(x)L(f(x)) - \int_{\mathbb R^n} \left[f(x)-f(z)  \right] \left[g(x)-g(z)  \right] J(x-z) dz,\\ 
&&\int_{\mathbb R^n} g(x)L(f(x)) dx = \frac{1}{2} \int_{\mathbb R^n} \int_{\mathbb R^n}    \left[f(x)-f(z)  \right] \left[g(x)-g(z)  \right] J(x-z) dx dz,  
\end{eqnarray}
where $f,g\in C^1(\mathbb R^n)$ and the integrals are finite. 
\end{lemma}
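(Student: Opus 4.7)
The plan is to establish both identities by straightforward algebraic manipulation together with the symmetry properties of the kernel $J$. For the first identity, the key ingredient is the pointwise algebraic decomposition
\begin{equation*}
f(x)g(x)-f(z)g(z) = f(x)[g(x)-g(z)] + g(x)[f(x)-f(z)] - [f(x)-f(z)][g(x)-g(z)],
\end{equation*}
which is verified by direct expansion. Multiplying through by $J(x-z)$, integrating in $z$ over $\mathbb R^n\setminus B_\epsilon(x)$, and pulling the $z$-independent factors $f(x)$ and $g(x)$ outside the integrals, one obtains the sum $f(x)L(g(x))+g(x)L(f(x))$ together with a remainder integral whose integrand $[f(x)-f(z)][g(x)-g(z)]J(x-z)$ vanishes quadratically as $z\to x$ by the $C^1$ assumption on $f,g$, so the remainder is a genuine Lebesgue integral that survives the limit $\epsilon\to 0$ without needing a principal value. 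This yields the first identity.

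For the second identity, the strategy is to symmetrize using the evenness of $J$. Setting
\begin{equation*}
I:=\int_{\mathbb R^n} g(x) L(f(x))\,dx = \iint_{\mathbb R^{2n}} g(x)[f(x)-f(z)] J(x-z)\,dz\,dx,
\end{equation*}
where the exchange of the $\epsilon\to 0$ limit with the outer integral and the use of Fubini are justified by the hypothesis that the integrals are finite, I then rename $x\leftrightarrow z$ in the double integral and invoke $J(z-x)=J(x-z)$ to obtain the alternative representation
\begin{equation*}
I = \iint_{\mathbb R^{2n}} g(z)[f(z)-f(x)] J(x-z)\,dx\,dz.
\end{equation*}
Adding these two expressions for $I$ and dividing by $2$ gives
\begin{equation*}
I = \tfrac{1}{2}\iint_{\mathbb R^{2n}} [f(x)-f(z)][g(x)-g(z)] J(x-z)\,dx\,dz,
\end{equation*}
which is the claimed formula.

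The main technical obstacle is the careful handling of the principal-value definition of $L$: each manipulation must be shown to be compatible with the limit $\epsilon\to 0$. The cleanest way to manage this is the observation already noted above, namely that the three-term decomposition in the first identity splits the singular integral into two principal-value pieces that match the definitions of $L(f(x))$ and $L(g(x))$ exactly, plus a genuinely Lebesgue-integrable remainder whose near-diagonal behavior is controlled by $C^1$ smoothness. Once this is in place the second identity is a purely formal symmetrization, and the lemma's blanket assumption of finiteness of the integrals absorbs any remaining integrability concerns at infinity.
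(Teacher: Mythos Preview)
Your proof is correct and is precisely the standard elementary argument: the product-rule identity follows from the pointwise decomposition you wrote down, and the integration-by-parts identity follows from the symmetrization $x\leftrightarrow z$ together with the evenness of $J$. The paper itself omits the proof entirely, stating only that it is elementary, so your write-up supplies exactly the details the authors had in mind.
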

\begin{proof}  The proof is elementary and we omit it here. 
\end{proof}

We now  prove a stability inequality for solutions of (\ref{main}).  This inequality plays an important role in our proofs. 

\begin{prop}\label{stabineq}  
  Let $u=(u_i)_{i=1}^m$ denote a stable solution of symmetric system (\ref{main}) when  $J_i(x,z)=J_i(x-z)$ is a measurable symmetric kernel    that is even.  Then,  
\begin{equation} \label{stability}
\sum_{i,j=1}^{m} \int_{\RR^n}  \partial_j H_i(u) \zeta_i(x) \zeta_j(x) dx \le  \frac{1}{2} \sum_{i=1}^{m} \int_{\RR^n} \int_{\RR^n}  [\zeta_i(x)- \zeta_i(z)]^2 J_i(x-z) dz dx , 
\end{equation} 
for any $\zeta=(\zeta_i)_{i=1}^m$ where $ \zeta_i \in C_c^1(\mathbb R^n)$ for $1\le i\le m$. 
\end{prop}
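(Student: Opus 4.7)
The plan is to test the $i$-th linearized equation $L_i(\phi_i)=\sum_j\partial_j H_i(u)\,\phi_j$ against an admissible approximation of $\zeta_i^2/\phi_i$, use the self-adjoint identity from Lemma \ref{fgprop}, and then combine a pointwise Cauchy--Schwarz-type bound with a symmetrization across pairs $(i,j)$ to extract the desired quadratic form on the left-hand side of \eqref{stability}.

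First I would normalize signs. Setting $\tau_i:=\operatorname{sgn}(\phi_i)\in\{-1,+1\}$, $\tilde\phi_i:=\tau_i\phi_i\ge 0$, $b_{ij}:=\tau_i\tau_j\,\partial_j H_i(u)$, and $\xi_i:=\tau_i\zeta_i$, the linearized equation reads $L_i(\tilde\phi_i)=\sum_j b_{ij}\tilde\phi_j$; Definition \ref{stable} gives $b_{ij}\ge 0$ off the diagonal, Definition \ref{symmetric} gives $b_{ij}=b_{ji}$, and \eqref{stability} becomes the same inequality with $\partial_j H_i(u)$, $\zeta_i$ replaced by $b_{ij}$, $\xi_i$. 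I may further assume $\tilde\phi_i>0$ pointwise: for $m\ge 2$ this is forced by the strict sign condition, while for $m=1$ the off-diagonal contribution is absent. Then for $\varepsilon>0$, multiplying the linearized equation by $\xi_i^2/(\tilde\phi_i+\varepsilon)$, integrating, and using $L_i(\tilde\phi_i+\varepsilon)=L_i(\tilde\phi_i)$ together with Lemma \ref{fgprop} produces
\begin{equation*}
\sum_{j=1}^m\int_{\RR^n} b_{ij}\,\frac{\tilde\phi_j\,\xi_i^2}{\tilde\phi_i+\varepsilon}\,dx=\frac12\iint_{\RR^{2n}}\bigl[\tilde\phi_i(x)-\tilde\phi_i(z)\bigr]\Bigl[\tfrac{\xi_i^2(x)}{\tilde\phi_i(x)+\varepsilon}-\tfrac{\xi_i^2(z)}{\tilde\phi_i(z)+\varepsilon}\Bigr]J_i(x-z)\,dz\,dx.
\end{equation*}

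With $a=\tilde\phi_i(x)+\varepsilon$, $b=\tilde\phi_i(z)+\varepsilon$, $\alpha=\xi_i(x)$, $\beta=\xi_i(z)$, a direct calculation gives $[\xi_i(x)-\xi_i(z)]^2-(a-b)[\alpha^2/a-\beta^2/b]=a\beta^2/b+b\alpha^2/a-2\alpha\beta\ge 0$ by AM--GM, so the right-hand side of the preceding display is at most $\tfrac12\iint[\xi_i(x)-\xi_i(z)]^2 J_i(x-z)\,dz\,dx$. Summing over $i$ and sending $\varepsilon\to 0$ by dominated convergence (legitimate because $\tilde\phi_i>0$) then yields
\begin{equation*}
\sum_{i,j=1}^m\int_{\RR^n}b_{ij}\,\tfrac{\tilde\phi_j}{\tilde\phi_i}\,\xi_i^2\,dx\le\tfrac12\sum_{i=1}^m\iint_{\RR^{2n}}[\xi_i(x)-\xi_i(z)]^2\,J_i(x-z)\,dz\,dx.
\end{equation*}

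The argument closes by symmetrizing the left-hand side. Pairing indices $(i,j)$ and $(j,i)$ and using $b_{ij}=b_{ji}\ge 0$ for $i<j$ together with AM--GM,
\begin{equation*}
b_{ij}\tfrac{\tilde\phi_j}{\tilde\phi_i}\xi_i^2+b_{ij}\tfrac{\tilde\phi_i}{\tilde\phi_j}\xi_j^2\ge 2b_{ij}|\xi_i\xi_j|\ge 2b_{ij}\xi_i\xi_j,
\end{equation*}
so the previous display dominates $\sum_{i,j}\int_{\RR^n} b_{ij}\xi_i\xi_j\,dx$; undoing the substitution $\xi_i=\tau_i\zeta_i$ recovers \eqref{stability}. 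I expect the principal difficulty to be the $\varepsilon\to 0$ passage if $\tilde\phi_i$ were allowed to vanish; this is sidestepped here by the strict stability condition (or, for $m=1$, by a nonlocal strong maximum principle applied to $L_i$), while the remaining steps are purely algebraic.
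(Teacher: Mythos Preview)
Your proof is correct and follows essentially the same route as the paper's: test the linearized equation against $\zeta_i^2/\phi_i$, apply Lemma~\ref{fgprop}, use the pointwise inequality $(a-b)\bigl(\alpha^2/a-\beta^2/b\bigr)\le(\alpha-\beta)^2$ for $a,b>0$, and then symmetrize over pairs $(i,j)$ via AM--GM together with the sign condition $\partial_j H_i(u)\phi_i\phi_j>0$. The only cosmetic differences are that you normalize signs explicitly via $\tau_i=\operatorname{sgn}\phi_i$ and insert an $\varepsilon$-regularization before dividing by $\tilde\phi_i$, whereas the paper works directly with $\phi_i$ and divides without comment; your version is slightly more careful on this point but the underlying argument is identical.
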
  
\begin{proof}
Suppose that $u=(u_i)_{i=1}^m$ denote a stable solution of (\ref{main}). Then there exists a sequence of functions $\phi=(\phi_k)_{k=1}^m$  such that   $\partial_j H_i(u)  \phi_j \phi_i >0 $ and 
\begin{equation} \label{Lii}
L_i(\phi_i)= \sum_{j=1}^m \partial_j H_i(u)  \phi_j   \ \ \ \text{in}\ \ \mathbb R^n. 
  \end{equation} 
Multiply both sides with $\frac{\zeta^2_i}{\phi_i}$ where $\zeta=(\zeta_i)_{i=1}^m$ is a sequence of test functions. Therefore, 
\begin{equation} \label{L1}
\sum_{i=1}^m L_i (\phi_i) \frac{\zeta^2_i}{\phi_i}=\sum_{i,j=1}^m \partial_j H_i(u) \frac{\phi_j}{\phi_i} \zeta^2_i    \ \ \ \text{in}\ \ \mathbb R^n.
  \end{equation} 
 Note that the right-hand side can be rewritten as  
\begin{eqnarray}
  \sum_{i,j=1}^{m}   \partial_j H_i(u) \phi_j \frac{\zeta_i^2}{\phi_i}& =&   \sum_{i<j}^{m}   \partial_j H_i(u) \phi_j \frac{\zeta_i^2}{\phi_i} + \sum_{i>j}^{n}   \partial_j H_i(u) \phi_j \frac{\zeta_i^2}{\phi_i} + \sum_{i=1}^m \partial_i H_i(u) {\zeta_i^2}
  %\\&=& \sum_{i<j}^{m}   \partial_j H_i(u) \phi_j \frac{\zeta_i^2}{\phi_i} + \sum_{i<j}^{m}   \partial_i H_j(u) \phi_i \frac{\zeta_j^2}{\phi_j} +\sum_{i=1}^m  \partial_i H_i(u) {\phi_i^2}
  \\&=&  \sum_{i<j}^{m}    \left( \partial_j H_i(u) \phi_j \frac{\zeta_i^2}{\phi_i}+ \partial_i H_j(u) \phi_i\frac{\zeta_j^2}{\phi_j} \right)+  \sum_{i=1}^m \partial_i H_i(u) {\zeta_i^2}
   \\&\ge &   \sum_{i<j}^{m}     \partial_j H_i(u)  \phi_i \phi_j    \left(  \frac{\zeta_i^2}{\phi^2_i}+      \frac{\zeta_j^2}{\phi^2_j} \right) + \sum_{i=1}^m \partial_i H_i(u) {\zeta_i^2}
  \\&\ge &  2 \sum_{i<j}^{m}     \partial_j H_i(u)  \zeta_i \zeta_j+ \sum_{i=1}^m \partial_i H_i(u) {\zeta_i^2}
 =  \sum_{i,j=1}^{m}   \partial_j H_i(u)  \zeta_i\zeta_j.
  \end{eqnarray}
  Here, we have used the notion of symmetric systems and Definition \ref{stable}.  From this and (\ref{L1}) we get 
\begin{equation} \label{LL1}
\sum_{i,j=1}^{m}  \int_{\RR^n} \partial_j H_i(u(x))  \zeta_i(x)\zeta_j(x)dx \le  \sum_{i=1}^m  \int_{\RR^n} L (\phi_i(x)) \frac{\zeta^2_i(x)}{\phi_i(x)} dx . 
  \end{equation} 
Applying Lemma \ref{fgprop} for the right-hand side of the above for each $i$ we have
\begin{equation}\label{Lphi}
\int_{\RR^n} L_i (\phi_i(x)) \frac{\zeta^2_i(x)}{\phi_i(x)} dx=
\frac{1}{2} \int_{\RR^n} \int_{\RR^n} [\phi_i(x) - \phi_i(z)] \left[ \frac{\zeta^2_i(x)}{\phi_i(x)}-  \frac{\zeta^2_i(z)}{\phi_i(z)} \right] J_i(x-z) dx dz. 
\end{equation}
Note that for $a,b,c,d\in\mathbb R$ when $ab<0$ we have 
\begin{equation}
(a+b)\left[  \frac{c^2}{a} +  \frac{d^2}{b}  \right] \le (c-d)^2    .
\end{equation}
Since each $\phi_i$ does not change sign, we have $\phi_i(x)\phi_i(z)>0$. Setting  $a=\phi_i(x)$, $b=-\phi_i(z)$, $c=\zeta_i(x)$ and  $d=\zeta_i(z)$ in the above inequality and from the fact that $ab=-\phi_i(x)\phi_i(z)<0$, we  conclude 
\begin{equation}
[\phi_i(x) - \phi_i(z)] \left[ \frac{\zeta^2_i(x)}{\phi_i(x)}-  \frac{\zeta^2_i(z)}{\phi_i(z)} \right] \le [\zeta_i(x)- \zeta_i(z)]^2    .
\end{equation} 
Therefore, 
\begin{equation}
\int_{\RR^n} L_i (\phi_i(x)) \frac{\zeta^2_i(x)}{\phi_i(x)} dx\le  \frac{1}{2} \int_{\RR^n} \int_{\RR^n}  [\zeta_i(x)- \zeta_i(z)]^2 J_i(z-x) dz dx.\end{equation} 
This together with (\ref{LL1}) complete the proof.

\end{proof}

Applying the above stability inequality we prove a Poincar\'{e} type inequality for stable solutions. Note that in the absence of stability,  Poincar\'{e}  inequalities are given in \cite{ber,bbck,bbg} and references therein for various kernels.  

\begin{thm}\label{lempoin}
 Assume that  $m,n\ge 1$ and $ u=(u_i)_{i=1}^m$ is a stable solution of (\ref{main}).  Then,  the   inequality (\ref{poincarein1})-(\ref{poincarein3}) holds where  $\mathcal A_y(\nabla_x u_i)$ and $ \mathcal B_y(\nabla_x u_i) $ are given by (\ref{mathcalA}) and (\ref{mathcalB}). 
 \end{thm}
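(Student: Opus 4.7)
The plan is to combine two ingredients: an identity obtained by differentiating the equation in each spatial variable $x_k$ and testing against $\partial_{x_k}u_i\,\eta_i^2$, and the stability inequality of Proposition \ref{stabineq} applied to the test function $\zeta_i=|\nabla u_i|\eta_i$. Differentiating $L_i(u_i)=H_i(u)$ in $x_k$ yields $L_i(\partial_{x_k}u_i)=\sum_j \partial_j H_i(u)\,\partial_{x_k}u_j$. Multiplying by $\partial_{x_k}u_i\,\eta_i^2$, integrating, and summing over $i$ and $k$, then invoking Lemma \ref{fgprop} to rewrite $\int L_i(f)g\,dx$ as the symmetric double integral $\tfrac{1}{2}\iint[f(x)-f(z)][g(x)-g(z)]J_i(x-z)\,dx\,dz$, the pointwise expansion
\begin{equation*}
\sum_{k=1}^n\bigl[\partial_{x_k}u_i(x)-\partial_{x_k}u_i(z)\bigr]\bigl[\partial_{x_k}u_i(x)\eta_i^2(x)-\partial_{x_k}u_i(z)\eta_i^2(z)\bigr]=|\nabla u_i(x)|^2\eta_i^2(x)+|\nabla u_i(z)|^2\eta_i^2(z)-(\nabla u_i(x)\cdot\nabla u_i(z))[\eta_i^2(x)+\eta_i^2(z)]
\end{equation*}
produces the identity
\begin{equation*}
\sum_{i,j=1}^m\int\partial_j H_i(u)(\nabla u_i\cdot\nabla u_j)\eta_i^2\,dx=\tfrac{1}{2}\sum_{i=1}^m\iint\bigl[|\nabla u_i(x)|^2\eta_i^2(x)+|\nabla u_i(z)|^2\eta_i^2(z)-(\nabla u_i(x)\cdot\nabla u_i(z))(\eta_i^2(x)+\eta_i^2(z))\bigr]J_i(x-z)\,dx\,dz.
\end{equation*}

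Next, applying the stability inequality (\ref{stability}) to $\zeta_i=|\nabla u_i|\eta_i$ gives
\begin{equation*}
\sum_{i,j=1}^m\int\partial_j H_i(u)|\nabla u_i||\nabla u_j|\eta_i\eta_j\,dx\le\tfrac{1}{2}\sum_{i=1}^m\iint\bigl(|\nabla u_i(x)|\eta_i(x)-|\nabla u_i(z)|\eta_i(z)\bigr)^2J_i(x-z)\,dx\,dz.
\end{equation*}
Subtracting the identity from this stability inequality annihilates the $i=j$ contributions on the left, since $|\nabla u_i|^2\eta_i^2-(\nabla u_i\cdot\nabla u_i)\eta_i^2\equiv 0$, leaving precisely the off-diagonal combination appearing in (\ref{poincarein2}). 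On the right, the difference of the two integrands reduces, after cancellation of the $|\nabla u_i(x)|^2\eta_i^2(x)+|\nabla u_i(z)|^2\eta_i^2(z)$ terms, to $\gamma(p^2+q^2)-2\alpha\beta pq$, where $\alpha=|\nabla u_i(x)|$, $\beta=|\nabla u_i(z)|$, $\gamma=\nabla u_i(x)\cdot\nabla u_i(z)$, $p=\eta_i(x)$, $q=\eta_i(z)$. The elementary identity
\begin{equation*}
\gamma(p^2+q^2)-2\alpha\beta pq=\alpha\beta(p-q)^2-(\alpha\beta-\gamma)(p^2+q^2)
\end{equation*}
then lets me transpose the nonnegative quantity $\tfrac{1}{2}\sum_i\iint(\alpha\beta-\gamma)(p^2+q^2)J_i$ to the left-hand side. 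Finally, the change of variables $y=z-x$, legitimate because each $J_i$ is even, transforms the resulting inequality into exactly (\ref{poincarein1})-(\ref{poincarein3}) with $\mathcal A_y(\nabla_x u_i)=\alpha\beta-\gamma$ and $\mathcal B_y(\nabla_x u_i)=\alpha\beta$.

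The delicate step is the admissibility of $\zeta_i=|\nabla u_i|\eta_i$ in Proposition \ref{stabineq}: the function is continuous and compactly supported, but need not be $C^1$ at points where $\nabla u_i$ vanishes. I would handle this by a standard regularisation, replacing $|\nabla u_i|$ with $\sqrt{|\nabla u_i|^2+\varepsilon}$, verifying the stability inequality for the resulting $C^1$ test function, and passing to the limit $\varepsilon\downarrow 0$ by dominated convergence, using $|\nabla u_i|\in L^\infty$ and the compact support of $\eta_i$ to dominate the $J_i$-integrals uniformly in $\varepsilon$. A secondary technicality is justifying the commutation of $L_i$ with $\partial_{x_k}$ used to differentiate (\ref{main}); this follows under the standing hypotheses $u_i\in C^1$ and $|\nabla u_i|\in L^\infty$ by a standard difference-quotient argument inside the principal value defining $L_i$.
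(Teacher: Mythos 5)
Your argument is essentially identical to the paper's proof: both combine the stability inequality of Proposition \ref{stabineq} tested on $\zeta_i=|\nabla u_i|\eta_i$ with the identity obtained by differentiating the equation in $x_k$, testing against $\partial_{x_k}u_i\,\eta_i^2$ and applying Lemma \ref{fgprop}, and then regroup via the same elementary algebraic identity (the paper writes it as $\eta_i(x)\eta_i(x+y)=\tfrac12[\eta_i^2(x)+\eta_i^2(x+y)]-\tfrac12[\eta_i(x)-\eta_i(x+y)]^2$, which is equivalent to your $\gamma(p^2+q^2)-2\alpha\beta pq=\alpha\beta(p-q)^2-(\alpha\beta-\gamma)(p^2+q^2)$). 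Your closing remarks on the admissibility of $|\nabla u_i|\eta_i$ and on commuting $L_i$ with $\partial_{x_k}$ address technical points the paper passes over in silence, and are a welcome addition rather than a deviation.
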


\begin{proof} Since $u=(u_i)_{i=1}^m$ is a stable solution of (\ref{main}), Propostion \ref{stabineq} implies that the stability (\ref{stability}) holds.  We now test the stability inequality on $\zeta_i(x)=|\nabla _x u_i(x)| \eta_i(x)$ where $\eta=(\eta_i)_{i=1}^m$ is a sequence of test functions with $\eta_i\in C_c^1(\mathbb R^n)$, to get 
 \begin{eqnarray} \label{stability}
&&\sum_{i,j=1}^{m} \int_{\RR^n}  \partial_j H_i(u) |\nabla_x u_i(x)|  |\nabla _x u_j(x)|\eta_i(x) \eta_j(x) dx 
\\&\le&  \frac{1}{2} \sum_{i=1}^{m} \int_{\RR^n} \int_{\RR^n}  [ |\nabla_x u_i(x)|  \eta_i(x)-  |\nabla_x u_j(x+y)| \eta_i(x+y)]^2 J_i (y) dy dx. 
\end{eqnarray} 
Rearranging terms in both sides of the above inequality we obtain 
 \begin{eqnarray} \label{Hijstability}
&&\sum_{i=1}^{m} \int_{\RR^n}  \partial_i H_i(u) |\nabla_x u_i(x)|^2  \eta^2_i(x) dx 
 +\sum_{i\neq j}^{m} \int_{\RR^n}  \partial_j H_i(u) |\nabla_x u_i(x)|  |\nabla _x u_j(x)|\eta_i(x) \eta_j(x) dx 
\\&\le&  
\frac{1}{2} \sum_{i=1}^{m} \int_{\RR^n} \int_{\RR^n}   |\nabla_x u_i(x)|^2  \eta^2_i(x) J_i(y) dy dx
+ \frac{1}{2} \sum_{i=1}^{m} \int_{\RR^n} \int_{\RR^n}   |\nabla_x u_i(x+y)|^2  \eta^2_i(x+y) J_i(y) dy dx
\\&& - \sum_{i=1}^{m} \int_{\RR^n} \int_{\RR^n}  |\nabla_x u_i(x)| |\nabla_x u_j(x+y)| \eta_i(x) \eta_i(x+y) J_i(y) dy dx. 
\end{eqnarray} 
We now apply the equation (\ref{main}). Note that for any index $1\le k\le n$ we have 
\begin{equation}
L_i(\partial_{x_k} u_i(x))=\partial_{x_k} L_i(u_i(x))= \sum_{j=1}^m \partial_j H_i(u(x)) \partial_{x_k} u_j(x) . 
\end{equation}
Multiplying both sides of the above equation with $\partial_{x_k} u_i(x) \eta_i^2(x)$  and integrating we have
\begin{equation}
\sum_{i,j=1}^m \int_{\mathbb R^n} \partial_j H_i(u) \partial_{x_k} u_j(x) \partial_{x_k} u_i(x) \eta_i^2(x)
dx =  \sum_{i=1}^m \int_{\mathbb R^n} \partial_{x_k} u_i(x) \eta_i^2(x) L_i (\partial_{x_k} u_i(x)) dx . 
\end{equation}
We now apply Lemma \ref{fgprop} for the right-hand side of the above equality 
\begin{eqnarray}
&& \sum_{i=1}^m \int_{\mathbb R^n} \partial_i H_i(u) |\nabla_x u_i(x)|^2   \eta_i^2(x) dx 
+ \sum_{i\neq j}^m \int_{\mathbb R^n} \partial_j H_i(u)  \nabla_x u_j(x) \cdot  \nabla_x u_i(x) \eta_i^2(x)
\\&=& 
 \frac{1}{2}  \sum_{i=1}^m \int_{\mathbb R^n}\int_{\mathbb R^n} 
\left [ \partial_{x_k} u_i(x) \eta_i^2(x) -\partial_{x_k} u_i(x+y) \eta_i^2(x+y)\right ]\left[ \partial_{x_k} u_i(x) -\partial_{x_k} u_i(x+y) \right] J_i(y) dx dy  .
\end{eqnarray}
This implies that 
\begin{eqnarray}\label{HijIden}
&& \sum_{i=1}^m \int_{\mathbb R^n} \partial_i H_i(u) |\nabla_x u_i(x)|^2   \eta_i^2(x) dx 
+ \sum_{i\neq j}^m \int_{\mathbb R^n} \partial_j H_i(u)  \nabla_x u_j(x) \cdot  \nabla_x u_i(x) \eta_i^2(x)
\\&=&   \frac{1}{2}  \sum_{i=1}^m \int_{\mathbb R^n}\int_{\mathbb R^n} |\nabla_x u_i(x)|^2   \eta_i^2(x)  J_i(y) dx dy 
 +  \frac{1}{2}  \sum_{i=1}^m \int_{\mathbb R^n}\int_{\mathbb R^n} |\nabla_x u_i(x+y)|^2   \eta_i^2(x+y)  J_i(y) dx dy
 \\&&   -  \frac{1}{2}  \sum_{i=1}^m \int_{\mathbb R^n}\int_{\mathbb R^n} \nabla_x u_i(x) \cdot \nabla_x u_i(x+y)   \eta_i^2(x)  J_i(y) dx dy 
 \\&&   -  \frac{1}{2}  \sum_{i=1}^m \int_{\mathbb R^n}\int_{\mathbb R^n} \nabla_x u_i(x) \cdot \nabla_x u_i(x+y)   \eta_i^2(x+y)  J_i(y) dx dy   . 
\end{eqnarray}
Combining (\ref{HijIden}) and (\ref{Hijstability}) we obtain 
\begin{eqnarray}\label{HijIden2}
 && \sum_{i=1}^{m} \int_{\RR^n} \int_{\RR^n}  |\nabla_x u_i(x)| |\nabla_x u_j(x+y)| \eta_i(x) \eta_i(x+y) J_i(y) dy dx 
\\&& +\sum_{i\neq j}^{m} \int_{\RR^n}  \partial_j H_i(u) \left[|\nabla_x u_i(x)|  |\nabla _x u_j(x)|\eta_i(x) \eta_j(x) - \nabla_x u_i(x) \cdot \nabla _x u_j(x)  \eta_i^2(x)\right]dx 
\\&\le&  
  \frac{1}{2}  \sum_{i=1}^m   \int_{\RR^n} \int_{\RR^n}\nabla_x u_i(x) \cdot \nabla_x u_i(x+y)  \left[ \eta_i^2(x) +\eta_i^2(x+y) \right]  J_i(y) dx dy  .
\end{eqnarray}
Using the fact that $ \eta_i(x) \eta_i(x+y)  =\frac{1}{2} [\eta_i^2(x) +\eta_i^2(x+y)] -  \frac{1}{2} \left[ \eta_i(x) -\eta_i(x+y) \right]^2 $ and regrouping terms we get the desired result. 
 
\end{proof}

For scalar equations,  that is when $m=1$, the following term 
$$
\sum_{i\neq j}^m \int_{  \mathbb R^{n}  }   \partial_{ j} H_i( u)   \left[  |\nabla_{ x}  u_i(x)|  |\nabla_{ x}  u_j(x)| \eta_i(x) \eta_j(x)   -   \nabla_{ x}  u_i(x) \cdot   \nabla_{ x}  u_j(x) \eta_i^2(x) \right] dx
, $$
disappears in the Poincar\'{e} inequality (\ref{poincarein1})-(\ref{poincarein3}). Therefore,  we have the following direct consequence of the above theorem. 

\begin{cor}\label{poincor} 
Let $n\ge 1$,  $m=1$ and $ u$ be a stable solution of (\ref{main}).  Then, the inequality (\ref{poinsysm1})  holds. 
\end{cor}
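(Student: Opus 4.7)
The plan is to specialize Theorem~\ref{lempoin} to the scalar case $m=1$ and observe that the inequality collapses. When $m=1$ the index set $\{(i,j): i\neq j,\ 1\le i,j\le m\}$ is empty, so the entire middle sum on line (\ref{poincarein2}), the one carrying the off-diagonal partial derivatives $\partial_j H_i(u)$, vanishes identically. What survives is precisely
\[
\tfrac{1}{2}\iint_{\mathbb R^{2n}} \mathcal A_y(\nabla_x u)[\eta^2(x)+\eta^2(x+y)]\, J(y)\, dx\, dy \le \tfrac{1}{2}\iint_{\mathbb R^{2n}} \mathcal B_y(\nabla_x u)[\eta(x)-\eta(x+y)]^2\, J(y)\, dx\, dy,
\]
and the common factor of $1/2$ cancels to give (\ref{poinsysm1}).

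Before invoking Theorem~\ref{lempoin} I would briefly check that none of its hypotheses become nontrivial in the scalar setting. The symmetric-system requirement of Definition~\ref{symmetric} asks that $\mathbb H = (\partial_i H_j(u))_{i,j}$ be symmetric, which is automatic when $m=1$ since a $1\times 1$ matrix is trivially symmetric. The stability assumption of Definition~\ref{stable} reduces, for a scalar equation, to the existence of a sign-definite $\phi$ satisfying $L(\phi) = H'(u)\phi$; the sign condition $\partial_j H_i(u)\phi_i\phi_j>0$ for $i<j$ is vacuous. Hence Proposition~\ref{stabineq}, and therefore Theorem~\ref{lempoin}, applies without modification.

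There is no genuine obstacle here: the corollary is a direct transcription of the $m=1$ case of Theorem~\ref{lempoin}, and the displayed remark immediately preceding the statement, that the cross term ``disappears'' when $m=1$, already identifies the single algebraic simplification that is needed. The proof therefore reduces to one sentence invoking Theorem~\ref{lempoin}, together with the observation that the empty sum vanishes.
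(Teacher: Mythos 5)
Your proposal is correct and matches the paper's own treatment: the corollary is stated as a direct consequence of Theorem \ref{lempoin}, with the cross term in (\ref{poincarein2}) vanishing because the index set $\{i\neq j\}$ is empty when $m=1$. Your additional check that the symmetry and stability hypotheses become trivial in the scalar case is a harmless (and welcome) elaboration of the same argument.
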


\section{De Giorgi type results and a Liouville theorem}\label{secDeG} 

In this section,  we provide a one-dimensional symmetry result for bounded stable solutions of symmetric system (\ref{main}) in two dimensions.  We assume that the gradient of solution is globally bounded in the entire space.  To do so,  we apply the Poincar\'{e} inequality given in (\ref{poincarein1})-(\ref{poincarein3}) for an appropriate test function.   We now define the following set, for $1\le i\le m$, that will be used frequently in this section
\begin{equation}
\Gamma_i^0 := \{(x,y)\in\mathbb R^n\times \mathbb R^n ;  \ |\nabla_x u_i(x)|\neq 0 \ \ \text{and}\ \  |\nabla_x u_i(x+y)|\neq 0 \}.
\end{equation}
If $m=1$, for the sake of simplicity, we use the notation $\Gamma^0:=\Gamma_1^0$. 

\begin{thm} Suppose that $u=(u_i)_{i=1}^m$  is a bounded stable solution of symmetric system (\ref{main}) in two dimensions.  Assume also that the jump kernel $J=(J_i)_{i=1}^m$ satisfies either (\ref{Jumpki}) or  (\ref{Jumpkei})-(\ref{Jumpkeri}) with $h_i(r)< C r^{-\theta_i}$ when $\theta_i>3$.   Then,  each $u_i$ must be a one-dimensional function for $i=1,\cdots,m$.  
  \end{thm}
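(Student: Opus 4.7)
The plan is to apply the Poincar\'e inequality (\ref{poincarein1})--(\ref{poincarein3}) of Theorem \ref{lempoin} with a signed, two-dimensional logarithmic cutoff. Let $\tau=(\tau_i)_{i=1}^m\in\{\pm 1\}^m$ be the orientation vector coming from stability, i.e.\ $\tau_i:=\mathrm{sign}(\phi_i)$, so that $\partial_j H_i(u)\,\tau_i\tau_j>0$ for all $i<j$. For $R>1$, define
\[
\eta_R(x):=\min\!\Bigl\{1,\ \max\!\bigl\{0,\ 2-\tfrac{\log|x|^2}{\log R}\bigr\}\Bigr\},
\]
so that $\eta_R\equiv 1$ on $B_{\sqrt R}$, $\eta_R\equiv 0$ off $B_R$, $\|\nabla\eta_R\|_{L^2(\mathbb R^2)}^2=O(1/\log R)$, and $\|\eta_R\|_{L^2(\mathbb R^2)}^2=O(R^2)$ (smoothing if a strictly $C^1$ cutoff is needed). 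I would test the Poincar\'e inequality with the signed sequence $\eta_i(x):=\tau_i\,\eta_R(x)$.

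With this choice, $\eta_i(x)\eta_j(x)=\tau_i\tau_j\,\eta_R^2(x)$ and $\eta_i^2(x)=\eta_R^2(x)$. Pairing $(i,j)$ with $(j,i)$ and invoking the symmetry $\partial_jH_i=\partial_iH_j$ of the system, each cross-term in (\ref{poincarein2}) reduces to
\[
2\,\partial_jH_i(u)\,\tau_i\tau_j\,\eta_R^2\bigl[|\nabla_x u_i||\nabla_x u_j|-\tau_i\tau_j\,\nabla_x u_i\!\cdot\!\nabla_x u_j\bigr]\ \ge\ 0,
\]
since $\partial_jH_i\,\tau_i\tau_j>0$ and the bracket is non-negative by Cauchy--Schwarz. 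Dropping these non-negative contributions and using the $L^\infty$ bound $|\nabla u_i|\le M$ to majorize $\mathcal B_y(\nabla_x u_i)\le M^2$, the Poincar\'e inequality becomes
\[
\sum_{i=1}^m\iint_{\mathbb R^{4}}\mathcal A_y(\nabla_x u_i)\bigl[\eta_R^2(x)+\eta_R^2(x+y)\bigr]J_i(y)\,dx\,dy\ \le\ M^2\sum_{i=1}^m\iint_{\mathbb R^{4}}\bigl[\eta_R(x)-\eta_R(x+y)\bigr]^2 J_i(y)\,dx\,dy.
\]
Using the elementary bound $\int[\eta_R(x+y)-\eta_R(x)]^2dx\le\min\!\bigl(|y|^2\|\nabla\eta_R\|_2^2,\,4\|\eta_R\|_2^2\bigr)$, I would show the right-hand side vanishes as $R\to\infty$. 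In the finite-range case (\ref{Jumpki}), the kernel is supported in $|y|\le\kappa_i$ and $\int|y|^2 J_i(y)\,dy<\infty$ (since $\alpha_i<2$), giving an estimate of order $C/\log R$. In the decaying case (\ref{Jumpkei})--(\ref{Jumpkeri}), I would split $y$ dyadically: the gradient regime $|y|\le R$ contributes $\frac{C}{\log R}\sum_{k\le\log_2 R}2^{k(2-\theta_i)}$, while the far-field regime $|y|>R$ contributes at most $C\|\eta_R\|_2^2\int_{|y|>R}J_i\le CR^{2-\theta_i}$; the hypothesis $\theta_i>3$ provides the needed margin for both to vanish.

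Letting $R\to\infty$ and applying Fatou over compact subsets of $\mathbb R^4$ forces $\mathcal A_y(\nabla_x u_i(x))\,J_i(y)=0$ almost everywhere, for each $1\le i\le m$. Since in both kernel classes one has $J_i(y)\ge\lambda_i|y|^{-n-\alpha_i}$ on a neighborhood of the origin, the identity $|\nabla_x u_i(x)||\nabla_x u_i(x+y)|=\nabla_x u_i(x)\!\cdot\!\nabla_x u_i(x+y)$ holds for a.e.\ $x\in\mathbb R^2$ and a.e.\ $y$ near $0$, meaning $\nabla u_i(x)$ and $\nabla u_i(x+y)$ are non-negatively parallel. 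Continuity of $\nabla u_i$ then makes the direction field $\nabla u_i/|\nabla u_i|$ locally constant on $\{\nabla u_i\neq 0\}$ and, by connectedness of $\mathbb R^2$, globally constant, so each $u_i$ is one-dimensional. The main technical bottleneck is the right-hand side estimate in the decaying regime: balancing $\|\eta_R\|_2^2\sim R^2$ against the tail decay $h_i(r)\le Cr^{-\theta_i}$ is delicate, and it is precisely here that $\theta_i>3$ is used; a secondary subtlety is that the sign of the cross terms relies crucially on the orientation vector $\tau$ from stability together with the symmetry of $\mathbb H$, without which (\ref{poincarein2}) cannot be discarded.
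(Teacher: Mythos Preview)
Your argument follows the paper's line exactly: test the Poincar\'e inequality (\ref{poincarein1})--(\ref{poincarein3}) with $\eta_i=\tau_i\eta_R$ for a logarithmic cutoff $\eta_R$, use orientability and the symmetry of $\mathbb H$ to make the cross terms (\ref{poincarein2}) non-negative and drop them, show the right-hand side (\ref{poincarein3}) tends to zero, and conclude $\mathcal A_y(\nabla_x u_i)\equiv 0$ and hence one-dimensionality. The only substantive difference is how you bound the right-hand side. The paper partitions the $(x,x+y)$-domain into the six regions $\Omega^1_R,\dots,\Omega^6_R$ of (\ref{OmegaR})--(\ref{OmegaR2}) and estimates each piece separately in four steps; you instead invoke the translation estimate $\int_{\mathbb R^2}|\eta_R(x+y)-\eta_R(x)|^2\,dx\le\min\bigl(|y|^2\|\nabla\eta_R\|_2^2,\,4\|\eta_R\|_2^2\bigr)$ and then integrate in $y$ with a dyadic split near $|y|\sim R$. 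Your route is shorter and in fact slightly sharper: with \emph{dyadic} shells your near-field sum $\sum_k 2^{k(2-\theta_i)}$ and far-field term $R^{2-\theta_i}$ both vanish already for $\theta_i>2$, whereas the paper's arithmetic shells $\{k\kappa_i<|z|<2k\kappa_i\}$ force $\sum_k k^{2-\theta_i}<\infty$, which is where $\theta_i>3$ enters. One small caution in your last paragraph: the passage from ``$\nabla u_i/|\nabla u_i|$ locally constant on $\{\nabla u_i\neq 0\}$'' to ``globally constant'' via connectedness of $\mathbb R^2$ tacitly assumes $\{\nabla u_i\neq 0\}$ is connected; the paper glosses over the same point, and it is routine to close (e.g.\ fix $e$ at one point and show $\partial_{e^\perp}u_i\equiv 0$).
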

\begin{proof} We provide the proof in two cases.  
\\
\\
\noindent {\bf Case 1. Scalar equations: $m=1$}. Set the following standard test function 
\begin{equation}\label{proofeta}
\eta (x):=\left\{
                      \begin{array}{ll}
                        \frac{1}{2}, & \hbox{if $|x|\le\sqrt{R}$,} \\
                      \frac{ \log {R}-\log {|x|}}{{\log R}}, & \hbox{if $\sqrt{R}< |x|< R$,} \\
                       0, & \hbox{if $|x|\ge R$.}
                                                                       \end{array}
                    \right.
                      \end{equation} 
We now apply inequality (\ref{poinsysm1}), given in Corollary \ref{poincor},   that is 
\begin{equation}\label{ABK}
 \iint_{ \Gamma^0 \cap  \mathbb R^{2n} }     \mathcal A_y(\nabla_x u)  \left [\eta^2(x)+\eta^2(x+y)\right] J_1(y) dx dy  \le C  \iint_{\Omega_R }  \left[ \eta(x) - \eta(y) \right] ^2 J_1(x-y) d x dy,
  \end{equation} 
where C is independent from $R$, and  
$\Omega_R:=\cup_{i=1}^6 \Omega^i_R$ is given by 
\begin{eqnarray}\label{OmegaR}
&& \Omega^1_R:=B_{\sqrt R}\times (B_ R\setminus B_{\sqrt R}), \  \Omega^2_R:=(B_ R\setminus B_{\sqrt R})\times (B_ R\setminus B_{\sqrt R}),  \  \Omega^3_R:=(B_ R\setminus B_{\sqrt R})\times (\mathbb R^n\setminus B_R),\ 
\\&&\label{OmegaR2} \ \Omega^4_R:=B_{\sqrt R}\times (\mathbb R^n\setminus B_R), \  \Omega^5_R:=B_{\sqrt R}\times B_{\sqrt R}, \ 
\Omega^6_R:=(\mathbb R^n\setminus B_R)\times (\mathbb R^n\setminus B_R) . 
\end{eqnarray}
Note that $|\eta(x)-\eta(y)|=0$ on $\Omega^5_R$ and $\Omega^6_R$. Therefore,  from (\ref{ABK}) we get 
\begin{eqnarray}\label{intABR}
\ \ \ \ \   \iint_{ \{\mathbb R^n \times B_{\sqrt R}\} \cap \Gamma^0}  \mathcal A_y(\nabla_x u) J_1(y)  dx dy 
 &\le& C \sum_{i=1}^4  \iint_{\Omega^i_R \cap |x-y|\le \kappa_1 }  \left[ \eta(x) - \eta(y) \right] ^2 J_1(x-y) d x dy  
\\&&+C \sum_{i=1}^4  \iint_{\Omega^i_R \cap |x-y| > \kappa_1 }  \left[ \eta(x) - \eta(y) \right] ^2 J_1(x-y) d x dy  \\&=:&\label{sumgam}
C \sum_{i=1}^4 A_i(R) + C \sum_{i=1}^4 B_i(R). 
  \end{eqnarray} 
We now compute an upper bound for each $ A_i(R)$ and $B_i(R)$ in four steps. To do so, we apply the following straightforward inequality frequently, 
\begin{equation}\label{logab}
|\log b - \log a|^2 \le \frac{1}{ab} |b-a|^2,
\end{equation}
where $a,b\in\mathbb R^+$. 

\noindent {\bf Step 1}: Suppose that  $(x,y)\in \Omega^1_R $.   Note that whenever $x\in B_{\sqrt{R}-\kappa_1}$ and $y\in B_ R\setminus B_{\sqrt R}$ or $x\in B_{\sqrt{R}}$ and $y\in B_R \setminus B_{\sqrt R +\kappa_1}$, we have $|x-y|>\kappa_1$. On these sets and whenever the kernel $J_1$ is with a finite range, ie. (\ref{Jumpki}) holds, then  $B_1(R)=0$.  To find an upper bound for $A_1(R)$, without loss of generality,  we assume that $x\in B_{\sqrt{R}}\setminus  B_{\sqrt{R}-\kappa_1}$ and $y\in B_ {\sqrt{R}+\kappa_1} \setminus B_{\sqrt R}$.  From the definition of the test function $\eta$ we have $\eta(x)=\frac{1}{2}$ and $\eta(y)=1-\frac{\log |y|}{\log R}$. Applying (\ref{logab}) and the fact that $|x|< \sqrt R \le |y|$ we get 
\begin{eqnarray}
|\eta(x) -\eta(y)|^2 &=& \frac{1}{\log^2 R} |\log |y| - \log \sqrt R|^2
\le \frac{1}{\log^2 R} \frac{1}{|y| \sqrt R}  | |y| -  \sqrt R|^2 
\\ &\le& \frac{1}{R\log^2 R}   | |y| -  |x||^2 \le \frac{1}{R\log^2 R}   | y -  x|^2   .
\end{eqnarray}
Substituting this in $A_1(R)$, we get 
\begin{eqnarray}
A_1(R) &\le&   \frac{C}{R\log^2 R}  \int_{B_{\sqrt{R}}\setminus  B_{\sqrt{R}-\kappa_1}} dx   \int_{B_{\kappa_1}} |z|^2 J_1(z) dz  
\\&\le & \frac{C }{R\log^2 R}  \int_{B_{\sqrt{R}}\setminus  B_{\sqrt{R}-\kappa_1}} dx \int_{B_{\kappa_1}}  |z|^{2-n-\alpha_1} dz \le  \frac{C}{(2-\alpha_1)} \frac{\kappa_1^{3-\alpha_1}}{\sqrt R \log^2 R} . 
\end{eqnarray} 
 Note that here we have used the facts that $0<\alpha_1<2$ and $n=2$. Now,  we assume that the jump kernel $J_1$ satisfies (\ref{Jumpkei})-(\ref{Jumpkeri}) with $h_1(r)< C r^{-\theta_1}$ when $\theta_1>3$. Note that the above upper bound for $A_1(R)$ holds. We now provide an upper bound for $B_1(R)$ as 
 \begin{equation}
B_1(R) \le  \frac{C}{R\log^2 R}  \int_{B_{\sqrt{R}}} dx  \sum_{k=1}^\infty \int_{k\kappa_1<|z|<2k\kappa_1} |z|^2 J_1(z) dz \le \frac{C \kappa_1^{2-\theta_1}}{R\log^2 R}  \int_{B_{\sqrt{R}}} dx  \sum_{k=1}^\infty k^{2-\theta_1} \le  \frac{C \kappa_1^{2-\theta_1 }}{\log^2 R}. 
\end{equation}

\noindent {\bf Step 2}: Suppose that  $(x,y)\in \Omega^2_R$.  Due to the symmetry in the domain, without loss of generality we assume that $|x|\le |y|$. Since $x,y\in B_ R\setminus B_{\sqrt R}$, from the definition of $\eta$ we have the following 
\begin{equation}
|\eta(x) -\eta(y)|^2 = \frac{1}{\log^2 R} |\log |y| - \log |x||^2 \le \frac{1}{\log^2 R} \frac{1}{|x| |y|}  | |y| -  |x||^2 \le \frac{1}{|x|^2\log^2 R}   | y -  x|^2   .
\end{equation}
If (\ref{Jumpki}) holds, then  $B_2(R)=0$.  For  $A_2(R)$,  we have 
\begin{eqnarray}
A_2(R) &\le& % \iint_{\Omega^4_R \cap |x-y|\le R_1 }  \left[ \eta(x) - \eta(y) \right] ^2 K(x-y) d x dy \le 
\frac{C}{\log^2 R}  \int_{B_{ R}\setminus B_{\sqrt R}} \frac{1}{|x|^2} dx   \int_{B_{\kappa_1}} |z|^2 J_1(z) dz  
\\&\le&  \frac{C }{\log^2 R}  \int_{\sqrt R}^R r^{n-3}dr \int_{B_{\kappa_1}}  |z|^{2-n-\alpha_1} dz 
\le   \frac{C}{2-\alpha_1} \frac{\kappa_1^{2-\alpha_1}}{\log R}  . 
\end{eqnarray} 
 Note that here we have also used the facts that $0<\alpha_1<2$ and $n=2$. Now suppose that (\ref{Jumpkei})-(\ref{Jumpkeri}) hold with $h_1(r)< C r^{-\theta_1}$ when $\theta_1>3$. Then, the above estimate for $A_2(R)$ holds and we have the following for $B_2(R)$, 
 \begin{eqnarray}\label{B2R}
B_2(R) &\le&  \frac{C}{\log^2 R} \int_{B_{ R}\setminus B_{\sqrt R}} \frac{1}{|x|^2} dx   \sum_{k=1}^\infty \int_{k\kappa_1<|z|<2k\kappa_1} |z|^2 J_1(z) dz
\\&\le& \frac{C \kappa_1^{2-\theta_1}}{\log^2 R}  \int_{\sqrt R}^R r^{n-3}dr   \sum_{k=1}^\infty k^{2-\theta_1} \le  \frac{C \kappa_1^{2-\theta_1 }}{\log R}. 
\end{eqnarray}

\noindent {\bf Step 3}: Suppose that  $(x,y)\in \Omega^3_R $. Note that whenever $x\in B_{R-\kappa_1}\setminus B_{\sqrt R}$ and $y\in \mathbb R^n\setminus B_{R}$ or $x\in B_{{R}}\setminus B_{\sqrt R}$ and $y\in \mathbb R^n \setminus B_{R +\kappa_1}$, we have $|x-y|>\kappa_1$. For these values of $(x,y)$, we have $B_3(R)=0$ provided  (\ref{Jumpki}) holds.  Therefore, without loss of generality we assume that $x\in B_{R}\setminus  B_{R-\kappa_1}$ and $y\in B_ {{R}+\kappa_1} \setminus B_{R}$ for large enough $R$.  From the definition of the test function $\eta$ we have $\eta(x)=1-\frac{\log |x|}{\log R}$ and $\eta(y)=0$. Applying (\ref{logab}) and the fact that $|x|<  R \le |y|$ we get 
\begin{eqnarray}
|\eta(x) -\eta(y)|^2 &=& \frac{1}{\log^2 R} |\log |x| - \log  R|^2 \le \frac{1}{\log^2 R} \frac{1}{|x| R}  | |x| -   R|^2  \le \frac{1}{|x|^2 \log^2 R}   | |y| -  |x||^2 
\\&\le& \frac{1}{|x|^2\log^2 R}   | y -  x|^2  . 
\end{eqnarray}
Substituting this in $A_3(R)$, we get 
\begin{eqnarray}
A_3(R) &\le&  %\iint_{\Omega^5_R \cap |x-y|\le R_1 }  \left[ \eta(x) - \eta(y) \right] ^2 K(x-y) d x dy \\ &\le& 
\frac{C}{\log^2 R}  \int_{B_{ R}\setminus B_{R-\kappa_1}} \frac{1}{|x|^2} dx   \int_{B_{\kappa_1}} |z|^2 J_1(z) dz  \\&\le & \frac{C }{\log^2 R}  \int_{R-\kappa_1}^R r^{n-3}dr \int_{B_{\kappa_1}}  |z|^{2-n-\alpha_1} dz 
\le  \frac{C}{2-\alpha_1} \frac{\kappa_1^{2-\alpha_1}}{\log^2 R} . 
\end{eqnarray} 
 Note that here we have used the facts that $0<\alpha_1<2$ and $n=2$. We now suppose that $J_1$ satisfies (\ref{Jumpkei})-(\ref{Jumpkeri}) with $h_1(r)< C r^{-\theta_1}$ when $\theta_1>3$. Similar arguments as the ones given in (\ref{B2R}) imply that 
  \begin{eqnarray}\label{B3R}
B_3(R) \le   \frac{C \kappa_1^{2-\theta_1 }}{\log R}.
\end{eqnarray}

\noindent {\bf Step 4}: Suppose that  $(x,y)\in \Omega^4_R$.  Note that for  large $R>4(1+\kappa_1)$,  we have $|x-y|>\kappa_1$. This implies that $A_4(R)=0$ for either (\ref{Jumpki}) or  (\ref{Jumpkei})-(\ref{Jumpkeri}).  Note also that $B_4(R)=0$ provided (\ref{Jumpki}).  We now assume that  (\ref{Jumpkei})-(\ref{Jumpkeri}) holds and we provide an estimate for $B_4(R)$. Note that $\eta(x)=\frac{1}{2}$ and $\eta(y)=0$ and  $|x-y|>R-\sqrt R>\kappa_1$
\begin{equation}\label{B4R}
B_4(R) = \frac{1}{2} \int_{ B_{\sqrt R}}  dx   \sum_{k=1}^\infty \int_{k(R-\sqrt R)<|z|<2k (R-\sqrt R)} J_1(z) dz \le \frac{C R}{(R-\sqrt R)^{\theta_1}}  \sum_{k=1}^\infty k^{-\theta_1} 
 \le  \frac{C}{ R^{\theta_1-1} }. 
\end{equation}

 From Step 1-4 and (\ref{intABR}),  we conclude that  
  \begin{equation}\label{}
 \iint_{ \{\mathbb R^2 \times B_{\sqrt R}\} \cap \Gamma^0 }    \mathcal A_y(\nabla_x u) J_1(y)  dx dy 
 \le  \frac{C}{\log R}\ \ \ \text{for large} \ R , 
\end{equation}
where $C$ is independent from $R$.  Sending $R\to\infty$ and applying the fact that 
 $$\mathcal A_y(\nabla_x u):=  |\nabla_x u(x)|  |\nabla_x u(x+y)| -\nabla_x u(x)  \cdot \nabla_x u(x+y) \ge 0,$$ for all $x,y\in\mathbb R^2$, we conclude $\mathcal A_y(\nabla_x u) J_1(y) = 0$ a.e. for all $(x,y)\in\mathbb R^4\cap \Gamma^0$. Therefore,  $\mathcal A_y(\nabla_x u) = 0$ for all $(x,y)\in \{\mathbb R^2 \times  B_{\delta_1} \}\cap \Gamma^0 $. This implies that 
\begin{equation}
|\nabla_x u(x)|  |\nabla_x u(x+y) |=\nabla_x u(x)  \cdot \nabla_x u(x+y). 
\end{equation}
Hence, 
\begin{equation}
\nabla_x u(x) \cdot \nabla_x^\perp u(x+y)=0 \ \ \text{for all} \ \ (x,y)\in \{\mathbb R^2 \times  B_{\delta_1} \}\cap \Gamma^0,
\end{equation}
where $ \nabla_x^\perp$ stands for the skew gradient. This completes the argument. 
\\
\\
\noindent {\bf Case 2. System of equations: $m \ge 2$}.   Since the system (\ref{main}) is {\it orientable}, there exist nonzero functions $\tau_k\in C^1(\mathbb{R}^n)$, $k=1,\cdots,m$, which do not change sign such that 
 \begin{equation}\label{sign}
  \partial_j H_{i}\tau_i\tau_j > 0 \ \ \ \text{for all} \ \ \ 1\le  i<j\le m.
  \end{equation} 
Test the Poincar\'{e} inequality  (\ref{poincarein1})-(\ref{poincarein3}) on $\eta_i(x):=\tau_i \eta(x)$ where  $\eta$ is given by (\ref{proofeta}) and $\tau_i\in\{-1,1\}$.  Therefore, 
\begin{eqnarray}\label{peta1}
&&\frac{1}{2}\sum_{i=1}^m  \iint_{  \mathbb R^{2n}  }   \mathcal A_y(\nabla_x u_i) [ \eta^2(x)+\eta^2(x+y)] J_i(y) dx dy
\\&& \label{peta2} +\sum_{i\neq j} \int_{  \mathbb R^{n}  }  \left[  |\nabla_{ x}  u_i(x)|  |\nabla_{ x}  u_j(x)|   -   \nabla_{ x}  u_i(x) \cdot   \nabla_{ x}  u_j(x) \tau_i \tau_j  \right]  \partial_{ j} H_i( u)  \tau_i \tau_j  \eta^2(x)  dx
\\&\le&  \label{peta3} C  \sum_{i=1}^m  \iint_{  \Omega_R  }  \left[ \eta(x) - \eta(y) \right] ^2 J_i(x-y) d x dy,
  \end{eqnarray} 
where $\Omega_R$ is given by (\ref{OmegaR})-(\ref{OmegaR2}).  Applying similar arguments as the ones given in Case 1, for each index $i$, one can see that (\ref{peta3}) approaches to zero when $R$ tends to infinity.  Note that the integrand in (\ref{peta1})  due to fact  that each $\mathcal A_y(\nabla_x u_i) $ is nonnegative. Note also that the integrand in (\ref{peta2}) is also nonnegative, since the system is orientable. Hence, both (\ref{peta1}) and (\ref{peta2}) must be zero. This implies that for all $1\le i\le m$, 
  \begin{equation}\label{uinabla}
\nabla_x u_i(x) \cdot \nabla_x^\perp u_i(x+y)=0 \ \ \text{for all} \ \ (x,y)\in \{\mathbb R^2 \times  B_{\delta_1} \}\cap \Gamma^i, 
\end{equation}
and   for all   $1\le i\neq j\le m$, 
  \begin{equation}\label{uiujnabla}
 |\nabla_{ x}  u_i(x)|  |\nabla_{ x}  u_j(x)|   =\nabla_{ x}  u_i(x) \cdot   \nabla_{ x}  u_j(x)   \tau_i \tau_j  \ \ \text{for all} \ \ x\in\mathbb R^2. 
\end{equation}
These imply that each $u_i$ must be a one-dimensional function and the angle between $\nabla_{ x}  u_i(x) $ and $\nabla_{ x}  u_j(x) $ is $\arccos ( \tau_i \tau_j )$. 

\end{proof}

In what follows,  we provide a Liouville theorem in one and two dimensions for bounded solutions of (\ref{main}) under certain sign assumptions on the nonlinearity $H$.  

\begin{thm}\label{liopo}
Let $m\ge 1$ and $n\le 2$. Suppose that $ u=(u_i)_{i=1}^m$ is a bounded solution of (\ref{main}) when the jump kernel $J=(J_i)_{i=1}^m$ satisfies either (\ref{Jumpki}) or  (\ref{Jumpkei})-(\ref{Jumpkeri}) with $h_i(r)< C r^{-\theta_i}$ when $\theta_i>3$. If $H_i(u) \ge0$ for an index $1\le i\le m$, then $u_i$ must be constant.   In addition, if $\sum_{i=1}^m u_i H_i(u)\le 0$, then all $u_i$ must be constant. 
\end{thm}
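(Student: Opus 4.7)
The plan is to deduce both assertions from a single testing identity against a multiplier of the form $v(x)\eta^2(x)$, where $\eta$ is the logarithmic cutoff from (\ref{proofeta}). I would apply Lemma \ref{fgprop} with $g=v\eta^2$ and $f=v$, and use the algebraic decomposition
\[
v(x)\eta^2(x)-v(z)\eta^2(z)=\tfrac12[v(x)+v(z)][\eta^2(x)-\eta^2(z)]+\tfrac12[v(x)-v(z)][\eta^2(x)+\eta^2(z)]
\]
to arrive at
\[
\int_{\RR^n} v\eta^2 L_i v \,dx =\tfrac14\iint_{\RR^{2n}}[v^2(x)-v^2(z)][\eta^2(x)-\eta^2(z)]J_i\,dx\,dz +\tfrac14\iint_{\RR^{2n}}[v(x)-v(z)]^2[\eta^2(x)+\eta^2(z)]J_i\,dx\,dz.
\]
For the first assertion I would take $v:=u_i-\sup_{\RR^n}u_i\le 0$, so that $L_iv=L_iu_i=H_i(u)\ge 0$ and $\int v\eta^2 L_iv\,dx\le 0$. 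For the second assertion I would take $v=u_i$ in the $i$-th equation and sum over $i$, using $\sum_i u_iH_i(u)\le 0$ to make the summed left-hand side nonpositive. In either case the coercive quadratic form $\iint[v(x)-v(z)]^2[\eta^2(x)+\eta^2(z)]J_i\,dx\,dz$ is bounded above by the modulus of the indefinite ``cross'' term.

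To estimate that cross term I would factor $v^2(x)-v^2(z)=[v(x)-v(z)][v(x)+v(z)]$ and $\eta^2(x)-\eta^2(z)=[\eta(x)-\eta(z)][\eta(x)+\eta(z)]$, use $|v(x)+v(z)|\le 2\|v\|_\infty$, apply Young's inequality with a small parameter $\delta>0$, and invoke $(\eta(x)+\eta(z))^2\le 2[\eta^2(x)+\eta^2(z)]$ to reach an inequality of the form
\[
\iint[v(x)-v(z)]^2[\eta^2(x)+\eta^2(z)]J_i\,dx\,dz \le 2\|v\|_\infty\delta\cdot(\text{same integral}) + \tfrac{\|v\|_\infty}{\delta}\iint[\eta(x)-\eta(z)]^2J_i\,dx\,dz.
\]
Choosing, say, $\delta=1/(4\|v\|_\infty)$ absorbs the first term on the right into the left and leaves
\[
\iint[v(x)-v(z)]^2[\eta^2(x)+\eta^2(z)]J_i(x-z)\,dx\,dz \le C\|v\|_\infty^2\iint[\eta(x)-\eta(z)]^2J_i(x-z)\,dx\,dz.
\]

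Finally I would invoke the four-region decomposition $\Omega_R=\bigcup_{k=1}^6\Omega_R^k$ and the estimates in Steps 1--4 of the preceding De Giorgi theorem, which show that under either (\ref{Jumpki}) or (\ref{Jumpkei})--(\ref{Jumpkeri}) with $h_i(r)<Cr^{-\theta_i}$ and $\theta_i>3$ one has $\iint[\eta(x)-\eta(z)]^2 J_i(x-z)\,dx\,dz=O(1/\log R)$ as $R\to\infty$; this is the place where $n\le 2$ is essential, through the integrals $\int_{\sqrt R}^R r^{n-3}\,dr$. Because $\eta\equiv 1/2$ on $B_{\sqrt R}$, the coercive integral dominates $\tfrac12\iint_{B_{\sqrt R}\times B_{\sqrt R}}[v(x)-v(z)]^2J_i\,dx\,dz$, so letting $R\to\infty$ and applying monotone convergence gives $\iint_{\RR^{2n}}[v(x)-v(z)]^2J_i(x-z)\,dx\,dz=0$. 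Since $J_i$ is strictly positive on a punctured neighbourhood of the origin in both kernel classes and $v$ is continuous (recall $u_i\in C^1$), $v$ must be locally, and hence globally, constant by connectedness of $\RR^n$; this proves the first assertion, and the second follows by reading the same chain of inequalities with the sum over $i$ in place. The main technical obstacle is the absorption step: it only succeeds because $\|v\|_\infty$ is a fixed constant independent of $R$, so the $\|v\|_\infty^2$ factor in the last estimate cannot overwhelm the $O(1/\log R)$ decay of the cutoff energy.
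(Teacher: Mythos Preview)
Your argument is correct and complete; the absorption via Young's inequality is valid because, for each fixed $R$, the coercive integral $\iint[v(x)-v(z)]^2[\eta^2(x)+\eta^2(z)]J_i\,dx\,dz$ is finite (this follows from the compact support of $\eta$, the Lipschitz bound on $v$, and the integrability of $\min\{1,|z|^2\}J_i(z)$ under either kernel hypothesis), so subtracting half of it from itself is legitimate.

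Your route differs from the paper's in two respects. First, the paper uses the \emph{linear} cutoff ($\eta=1$ on $B_R$, $\eta=0$ off $B_{2R}$, $|\nabla\eta|\le CR^{-1}$) together with a new domain decomposition $\Gamma_R^1,\dots,\Gamma_R^6$, whereas you recycle the logarithmic cutoff (\ref{proofeta}) and the estimates of Steps~1--4 from the previous theorem. Second, in place of your Young-absorption step, the paper applies Cauchy--Schwarz to the cross term, obtaining
\[
\iint \mathcal C_{x,y}^2(u_i)[\eta^2(x)+\eta^2(y)]J_i \le C\Bigl[\iint_{\Gamma_R}\mathcal C_{x,y}^2(u_i)[\eta^2+\eta^2]J_i\Bigr]^{1/2}\Bigl[\iint_{\Gamma_R}|\eta(x)-\eta(y)|^2 J_i\Bigr]^{1/2},
\]
and then proceeds by a two-pass bootstrap: the second bracket is shown to be merely \emph{bounded} (not decaying), which forces the full coercive integral to be finite; feeding this finiteness back in makes the first bracket tend to zero as $R\to\infty$ (it lives on $\Gamma_R$, which escapes to infinity), whence the full integral vanishes. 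Your approach is shorter and exploits the $O(1/\log R)$ decay of the logarithmic cutoff energy directly; the paper's approach is self-contained, requires only boundedness of the cutoff energy, and illustrates the standard Cauchy--Schwarz bootstrap that generalises more readily when no small-parameter absorption is available.
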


\begin{proof}
Consider the standard test function $\eta$ when $\eta=1$ in $\overline {B_R}$ and $\eta=0$ in $\overline{\mathbb R^n\setminus B_{2R}}$ with $\eta\in C_c^1(\mathbb R^n)$ and $||\nabla \eta||_{\infty}<C R^{-1}$ in $\overline{B_{2R}\setminus B_R}$ for $R>\kappa^*$. Suppose that  $H_i(u) \ge0$. Multiply the $i^{th}$ equation of (\ref{main}) with $(u_i(x) - ||u_i||_{\infty}) \eta^2(x)$ and integrate to get 
\begin{equation}
\int_{\mathbb R^n} (u_i(x) - ||u_i||_{\infty}) \eta^2(x) L(u_i(x)) dx \le 0.
\end{equation}
From Lemma \ref{fgprop},   we have
\begin{eqnarray}\label{mathcal1}
0 &\ge& \iint_{\mathbb R^{2n}} \mathcal C_{x,y}(u_i) [ (u_i(x)  - || u_i||_{\infty}) \eta^2(x) - (u_i(y)  - || u_i||_{\infty}) \eta^2(y)  ] J_i(x-y) dx dy
\\&=&  \label{mathcal2} \frac{1}{2}\iint_{\mathbb R^{2n}} \mathcal C_{x,y}(u_i)  \left(  \mathcal C_{x,y}(u_i)   \eta^2(x)+    \left[ u_i(y) - ||u_i||_{\infty}  \right ] [\eta^2(x)- \eta^2(y) ]   \right) J_i(x-y) dx dy
\\&& \label{mathcal1} + \frac{1}{2}\iint_{\mathbb R^{2n}}   \mathcal C_{x,y}(u_i)   \left(   \mathcal C_{x,y}(u_i)   \eta^2(y)+    \left[ u_i(x) - ||u_i||_{\infty}  \right ] [\eta^2(x)- \eta^2(y) ]   \right)J_i(x-y) dx dy   , 
\end{eqnarray}
when $ \mathcal C_{x,y}(u_i)  :=u_i(x)-u_i(y)$. Rearranging the terms, we conclude 
\begin{equation}
\iint_{\mathbb R^{2n}}
\mathcal C^2 _{x,y}(u_i)  [\eta^2(x) + \eta^2(y) ] J_i(x-y) dx dy  \le 4 || u_i||_{\infty} \iint_{\Gamma_R } | \mathcal C_{x,y}(u_i)|   |\eta^2(x)-\eta^2(y)| J_i(x-y) dx dy  ,
 \end{equation}
for $\Gamma_R=\cup_{i=1}^6\Gamma^i_R$ when 
\begin{eqnarray}\label{gamma2}
&&  \Gamma^1_R:=B_{ R}\times (B_ {2R}\setminus B_{R}), \  \Gamma^2_R:=(B_ {2R}\setminus B_{ R})\times (B_ {2R}\setminus B_{ R}), \ \Gamma^3_R:=(B_ {2R}\setminus B_{ R})\times (\mathbb R^n\setminus B_{2R}),\ 
\\&& \label{gamma3}  \Gamma^4_R:=B_{ R}\times (\mathbb R^n\setminus B_{2R}), \  \Gamma^5_R:=B_{R}\times B_{R},  \ \Gamma^6_R:=(\mathbb R^n\setminus B_{2R})\times (\mathbb R^n\setminus B_{2R}).  
\end{eqnarray}
 We now apply the Cauchy-Schwarz inequality to get 
\begin{eqnarray}
\ \ \ \ \iint_{\mathbb R^{2n}}  \mathcal C^2_{x,y}(u_i)   [\eta^2(x) + \eta^2(y) ]  J_i(x-y) dx dy &\le&
\label{Gamma2etaK} C \left[\iint_{\Gamma_R}  \mathcal C^2_{x,y}(u_i)   [\eta^2(x)+\eta^2(y)] J_i(x-y) dx dy \right]^{1/2}
\\&& 
\left[ \iint_{\Gamma_R}  |\eta(x)-\eta(y)|^2 J_i(x-y) dx dy \right]^{1/2} , 
 \end{eqnarray}
where we have used the fact that $  |\eta(x)+\eta(y)|^2 \le 2   [\eta^2(x)+\eta^2(y)]$.  Note that for $(x,y)\in\Gamma^5_R\cup\Gamma^6_R$,  we have $|\eta(x)-\eta(y)|=0$.   Therefore, 
\begin{eqnarray}\label{C2Keta}
\ \ \ \ \ \ \ \   \iint_{\mathbb R^{2n}} \mathcal C^2_{x,y}(u_i)    [\eta^2(x) + \eta^2(y) ]  J_i(x-y) dx dy &\le& C 
\sum_{k=1}^4 \iint_{\Gamma^k_R \cap \{|x-y| \le \kappa_i\} }  \left [ \eta(x) - \eta(y) \right]^2 J_i(x-y) dx dy
\\&&+C 
\sum_{k=1}^4 \iint_{\Gamma^k_R \cap \{|x-y| > \kappa_i\} }  \left [ \eta(x) - \eta(y) \right]^2 J_i(x-y) dx dy
\\&=&  C  \sum_{k=1}^4 E_k(R) + C  \sum_{k=1}^4 F_k(R), 
\end{eqnarray}
where $C$ is a positive constant and it is independent from $R$. We are now ready to compute an upper bound for each $E_k$ and $F_k$.  Note that   for    $(x,y)\in\cup_{i=1}^3 \Gamma^i_R$,  we have 
\begin{equation}
|\eta(x)- \eta(y)|^2 \le C R^{-2} |x-y|^2,
\end{equation}
and for $(x,y)\in\Gamma^4_R$, we have $|\eta(x)- \eta(y)|=1$.  We now provide upper bounds for $E_k,F_k$ in various steps. 
 
\noindent {\bf Step 1}: Suppose that  $(x,y)\in \Gamma^1_R $. First we assume that the jump kernel $J_i$ satisfies  (\ref{Jumpki}). If $x\in B_{R-\kappa_i}$ and $y\in B_ {2R}\setminus B_{R}$ or $x\in B_{R}$ and $y\in B_{2R} \setminus B_{R +\kappa_i}$, we have $|x-y|>\kappa_i$.  For such $(x,y)$,  we have $F_1(R)=0$.  We now provide an upper bound for $E_1(R)$. 
\begin{eqnarray}
E_1(R)&\le& C R^{-2} \int_{B_R\setminus B_{R-\kappa_i}} \int_{B_{R+\kappa_i}\setminus B_R}  |x-y|^2 J_i(x-y) dy dx 
\\ &\le&  CR^{-2} \int_{B_R\setminus B_{R-\kappa_i}} dx \int_{B_{\kappa_i}} |z|^{2-n-\alpha_i} dz \le \frac{C \kappa_i^{3-\alpha_i}}{2-\alpha_i}  R^{-1}   . 
\end{eqnarray}
Now,  suppose that (\ref{Jumpkei})-(\ref{Jumpkeri}) hold with $h_i(r)< C r^{-\theta_i}$ when $\theta_i>3$. Therefore, the above estimate holds for $E_1(R)$ and 
 \begin{equation}
F_1(R) \le  CR^{-2}  \int_{B_R} dx  \sum_{k=1}^\infty \int_{k\kappa_i<|z|<2k\kappa_i} |z|^2 J_i(z) dz
\le \frac{C \kappa_i^{2-\theta_i}}{R^2}  \int_{B_{R}} dx  \sum_{k=1}^\infty k^{2-\theta_i} \le  C \kappa_i^{2-\theta_i } , 
\end{equation} 
where we have used $\theta_1>3$,  $n=2$ and $0<\alpha_i<2$.

\noindent {\bf Step 2}: Suppose that  $(x,y)\in \Gamma^2_R$. Note that for the jump kernel $J_i$  satisfying  (\ref{Jumpki}) we have $F_2(R)=0$ and 
\begin{eqnarray}
E_2(R) \le CR^{-2} \int_{B_ {2R}\setminus B_{ R}} dx \int_{B_{\kappa_i}} |z|^{2-n-\alpha_i} dz \le \frac{C \kappa_i^{2-\alpha_i}}{2-\alpha_i}    . 
\end{eqnarray}
For kernels satisfying (\ref{Jumpkei})-(\ref{Jumpkeri}),  the above estimate holds for $E_2(R)$  and 
\begin{equation}\label{F2R}
F_2(R) \le  CR^{-2}    \int_{B_ {2R}\setminus B_{ R}} dx   \sum_{k=1}^\infty \int_{k\kappa_i<|z|<2k\kappa_i} |z|^2 J_i(z) dz  \le  C \kappa_i^{2-\theta_i }  . 
\end{equation}

\noindent {\bf Step 3}: Suppose that  $(x,y)\in \Gamma^3_R $.  Let $x\in B_{2R-\kappa_i}\setminus B_{ R}$ and $y\in \mathbb R^n\setminus B_{2R}$ or $x\in B_{{2R}}\setminus B_{ R}$ and $y\in \mathbb R^n \setminus B_{2R +\kappa_1}$, we have $|x-y|>\kappa_i$. Therefore, $F_3(R)=0$ when  $J_i$ satisfies  (\ref{Jumpki}).     We have the following estimate for $E_3(R)$, 
\begin{eqnarray}
E_3(R) \le C R^{-2} \int_{B_{2R}\setminus B_{2R-\kappa_i}} \int_{B_{2R+\kappa_i}\setminus B_{2R}}  |x-y|^2 J_i(x-y) dy dx 
 \le \frac{C \kappa_i^{3-\alpha_i}}{2-\alpha_i}  R^{-1}   .  
\end{eqnarray}
 In the above,  we have used the facts that $0<\alpha_i<2$ and $n=2$. Note that a similar estimate as (\ref{F2R}) holds for $F_3(R)$.

 \noindent {\bf Step 4}: Suppose that  $(x,y)\in \Gamma^4_R $.  Let $R$ be large enough that is $R>\kappa^*$.  For such $(x,y)$,  we have $|x-y|>R>\kappa_i$ for $1\le i\le m$. This implies that for kernels satisfying (\ref{Jumpki}),   we have $E_4(R)=F_4(R)=0$.    We now assume that  (\ref{Jumpkei})-(\ref{Jumpkeri}) holds and we provide an estimate for $F_4(R)$. Note that $\eta(x)=1$ and $\eta(y)=0$ and  $|x-y|>R>\kappa_i$, 
\begin{equation}\label{F4R}
F_4(R) =  \int_{ B_{ R}}  dx   \sum_{k=1}^\infty \int_{k R<|z|<2kR} J_i(z) dz  \le \frac{C R^2}{R^{\theta_i}}  \sum_{k=1}^\infty k^{-\theta_i} 
 \le  \frac{C}{ R^{\theta_i-2} }. 
\end{equation}

From the above steps and (\ref{C2Keta}) we get 
 \begin{equation}\label{}
\iint_{\mathbb R^{2n}} \mathcal C^2_{x,y}(u_i)   [\eta^2(x) + \eta^2(y) ]  J_i(x-y) dx dy \le C   , 
\end{equation}
where $C $ is a positive constant that is independent from $R$. From this, definition of $\eta$ and (\ref{Gamma2etaK}) we get 
\begin{equation}\label{}
\iint_{\mathbb R^{2n}}  \mathcal C^2_{x,y}(u_i)  J_i(x-y) dx dy =0.  
\end{equation}
This implies that  $ \mathcal C^2_{x,y}(u_i)  J_i(x-y) =0$ a.e. $(x,y)\in\mathbb R^2\times\mathbb R^2$.  Therefore, $\mathcal C^2_{x,y}(u_i) =|u_i(x)-u_i(y)|^2=0$  for $x\in\mathbb R^2$ and $y\in B_{\delta_i(x)}$. This implies that  $u_i$ is constant. Note that the case of $\sum_{i=1}^m u_i H_i(u)\le 0$ is very similar and we omit the proof.

\end{proof}

\section{Energy estimates and a Liouville theorem}\label{secen}

\subsection{Energy Estimates}
For a domain $\Omega\subset\mathbb R^n$, the associated energy functional to (\ref{main}) is
\begin{equation}
\mathcal E_J(u,\Omega):=\mathcal K_J(u,\Omega) - \int_{\Omega} \tilde H(u) dx, 
\end{equation}
when $\partial_i \tilde H(u)=H_i(u)$ and   the term $\mathcal K_J$ is given by 
\begin{equation}
\mathcal K_J(u,\Omega):= \frac{1}{4} \sum_{i=1}^m \iint_{\mathbb R^{2n}\setminus (\mathbb R^n\setminus \Omega)^2 }   |  u_i(x) -u_i(y) |^2 J_i(x-y) dy dx. 
\end{equation}
Since the jump kernel $J$ is even,  $\mathcal K_J$ becomes 
\begin{eqnarray}
\mathcal K_J(u,\Omega) &=& \frac{1}{4} \sum_{i=1}^m \int_{\Omega} \int_{\Omega}   |  u_i(x) -u_i(y) |^2 J_i(x-y) dy dx \\&&+ \frac{1}{2} \sum_{i=1}^m \int_{\Omega} \int_{\mathbb{R}^n\setminus \Omega}  |  u_i(x) -u_i(y) |^2 J_i(x-y) dy dx. 
\end{eqnarray}
Here we provide the  notion of layer solutions. This is a counterpart of (\ref{asymp}). 
\begin{dfn}\label{layer}
We say that $u=(u_i)_{i=1}^m$ is a layer solution of (\ref{main}) if $ u$ is a bounded solution of (\ref{main}) such that for each index $i$ the directional derivative $\partial_{x_n} u_i (x)$ does not change sign and 
 \begin{equation}\label{asympsys}
\lim_{x_n\to \pm\infty}  u(x',x_{n})= \omega^{\pm}
 \ \ \text{for} \ \ \ x'\in\mathbb R^{n-1}, 
\end{equation} when $\omega^{\pm}=(\omega_i^{\pm})_{i=1}^m$ and $\omega_i^{\pm}\in\mathbb R$.  
\end{dfn}
For the case of layer solutions of (\ref{main}) we assume that   $\tilde H(\omega^+)=\tilde H(\omega^-)$. This refers to multi-well potentials that is of great interests in this context.  Here is how we justify this assumption when $n=1$ and $m\ge 1$. Multiply both sides of (\ref{main}) with $u'_i(x)$ and integrate to get 
\begin{equation}
\sum_{i=1}^m \int_{\mathbb R}u'_i(x) L_i(u_i (x)) dx = \sum_{i=1}^m \int_{\mathbb R} H_i(u)  u'_i(x) dx = \int_{\mathbb R}   \left (\tilde H(u(x))  \right)' dx    . 
\end{equation} 
From Lemma \ref{fgprop} we have 
\begin{equation}
\frac{1}{4} \sum_{i=1}^m  \int_{\mathbb R}  \left ( \int_{\mathbb R} \frac{d}{dx} [u_i(x) - u_i(x+y)]^2 dx \right) J_i(y) dy = \tilde H(\omega^+)-\tilde H(\omega^-)    . 
\end{equation} 
 If the left-hand side of the above is finite,  then it vanishes. Therefore,  $\tilde H(\omega^+)=\tilde H(\omega^-)$.  We now provide an energy estimate for kernels (\ref{Jumpki}) and (\ref{Jumpkei})-(\ref{Jumpkeri}). 

\begin{thm}\label{thmlayerK1}
Let $n,m\ge 1$. Suppose that $ u=(u_i)_{i=1}^m$ is a bounded $H$-monotone layer solution of (\ref{main}) with  $\tilde H(\omega^+)=0$.  Assume also that  the jump kernel $J=(J_i)_{i=1}^m$ satisfies either (\ref{Jumpki}) or  (\ref{Jumpkei})-(\ref{Jumpkeri}) with $h_i(r)< C r^{-\theta_i}$ when $\theta_i>2$.  Then,     
\begin{equation}\label{EKRn}
\mathcal E_J(u,B_R) \le C  R^{n-1} \ \ \text{for} \ \  R>R^*, 
\end{equation}
where the positive constant $C$ is independent from $R$ but may depend on $\kappa_i,\alpha_i,\theta_i$. 
\end{thm}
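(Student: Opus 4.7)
The plan is to run a minimality-plus-competitor argument adapted to the present nonlocal system setting, in the spirit of Ambrosio--Cabr\'{e}. The two key ingredients are (i) that an $H$-monotone layer solution is a local minimizer of $\mathcal E_J$, and (ii) that an explicit competitor built from the asymptotic convergence $u(\cdot + te_n) \to \omega^+$ has energy of order $R^{n-1}$.

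For ingredient (i), I would show that $u$ satisfies $\mathcal E_J(u, B_R) \le \mathcal E_J(v, B_R)$ for every $v$ with $v = u$ on $\mathbb R^n \setminus B_R$, using the foliation $\{u(\cdot + te_n)\}_{t \in \mathbb R}$ as a calibration. The componentwise monotonicity $\partial_{x_n} u_i \neq 0$ combined with the cross-sign condition (\ref{huiuj}) of Definition \ref{Hmon} is what makes the calibration work, much as in the local system case treated in \cite{fg, mf}; the required nonlocal integration-by-parts step is supplied by Lemma \ref{fgprop}. For ingredient (ii), I would fix a smooth cutoff $\chi_R$ with $\chi_R \equiv 1$ on $B_{R-1}$ and $\chi_R \equiv 0$ outside $B_R$, pick $T = T(R) > 0$ large enough that $\|u(\cdot + Te_n) - \omega^+\|_{L^\infty(B_R)} \le R^{-s}$ where $s := \max\bigl(1, \tfrac{1}{2-\alpha^*}\bigr)$ with $\alpha^* := \max_i \alpha_i < 2$ (the existence of such $T$ follows from the layer condition (\ref{asympsys}) together with monotonicity in $x_n$), and set
\[
v_R(x) := u\bigl(x + T \chi_R(x)\, e_n\bigr).
\]
Since $\chi_R$ vanishes outside $B_R$, $v_R = u$ there and $v_R$ is an admissible competitor.

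It then remains to bound $\mathcal E_J(v_R, B_R) \le CR^{n-1}$. The potential contribution decomposes as follows: on $B_{R-1}$, where $v_R$ is $R^{-s}$-close to $\omega^+$, local Lipschitz continuity of $\tilde H$ together with $\tilde H(\omega^+) = 0$ gives $|\tilde H(v_R)| \le C R^{-s} \le C R^{-1}$, whose integral over $B_{R-1}$ is $O(R^{n-1})$, while on the annulus $B_R \setminus B_{R-1}$ of volume $O(R^{n-1})$ the $L^\infty$ bound on $u$ gives the same order. For the nonlocal kinetic term $\mathcal K_J(v_R, B_R)$, I would split by scale and by location. For pairs with both endpoints in $B_{R-1}$ and $|x-y| \le \kappa_i$, using $|v_R(x) - v_R(y)|^2 \le \min(4R^{-2s}, L^2|x-y|^2)$ (from the $L^\infty$ bound on $\nabla u$) and integrating against $J_i(z) \sim |z|^{-n-\alpha_i}$ produces an inner integral in $y$ of size $O(R^{-s(2-\alpha_i)}) \le O(R^{-1})$ thanks to the choice of $s$, uniformly in $x$; multiplying by $|B_{R-1}| = O(R^n)$ gives $O(R^{n-1})$. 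Pairs with at least one endpoint in the boundary annulus contribute $O(R^{n-1})$ via the finite second moment $\int_{|z|\le \kappa_i} |z|^2 J_i(z)\, dz < \infty$. Long-range pairs $|x-y| > \kappa_i$ vanish under (\ref{Jumpki}); under (\ref{Jumpkeri}) they are controlled dyadically by $\sum_{k \ge 1} C h_i(k\kappa_i) \le C \sum_{k \ge 1} (k\kappa_i)^{-\theta_i}$, a convergent series since $\theta_i > 2$, which combined with the $L^\infty$ bound on $u$ and the $O(R^{n-1})$ boundary volume again yields $O(R^{n-1})$. Cross terms between $B_R$ and $\mathbb R^n \setminus B_R$ are handled analogously.

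The main obstacle is ingredient (i): in the scalar local case this is the classical Alberti--Ambrosio--Cabr\'{e} minimality theorem, but the nonlocal, multicomponent extension requires a sliding argument that genuinely uses the cross-sign condition (\ref{huiuj}) together with the evenness and translation invariance of the kernels $J_i$. Once minimality is in hand, the remaining competitor estimates are routine but bookkeeping-intensive, chiefly in separating the long-range decay controlled by $h_i$ from the power-law singularity of $J_i$ at zero.
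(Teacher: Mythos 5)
Your route --- prove that an $H$-monotone layer solution is a local minimizer of $\mathcal E_J$ and then test against an explicit competitor --- is genuinely different from the paper's, and it has two gaps, one of which is the heart of the matter. The paper never establishes (and never needs) minimality. Instead it works directly with the sliding family $u^t(x):=u(x',x_n+t)$: since each $u^t$ solves (\ref{main}), the interior terms in $\partial_t\mathcal E_J(u^t,B_R)$ cancel and one is left with only the interaction between $B_R$ and its complement,
\begin{equation*}
\partial_t \mathcal E_J(u^t,B_R) = \sum_{i=1}^m \int_{\mathbb{R}^n\setminus B_R}\int_{B_R} [u^t_i(x)-u^t_i(y)]\,\partial_t u^t_i(x)\, J_i(x-y)\, dy\, dx .
\end{equation*}
Writing $\mathcal E_J(u,B_R)=\mathcal E_J(u^T,B_R)-\int_0^T\partial_t\mathcal E_J(u^t,B_R)\,dt$, the fixed sign of $\partial_t u_i^t$ (this is exactly where $H$-monotonicity enters) gives $\int_0^T|\partial_t u^t_i(x)|\,dt=|u^T_i(x)-u_i(x)|\le C$ \emph{independently of $T$}; letting $T\to\infty$ kills $\mathcal E_J(u^T,B_R)$ because $\tilde H(\omega^+)=0$, and what remains is $\iint_{(\mathbb{R}^n\setminus B_R)\times B_R}\min\{C,C|x-y|\}J_i(x-y)$ over three boundary shells, which is where the kernel hypotheses and the condition $\theta_i>2$ are used. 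Your ingredient (i), by contrast, is the hardest and least standard part of your plan: a calibration proof of minimality for nonlocal \emph{systems} is not in the literature cited here, is only sketched in your proposal, and typically yields minimality only within the class of competitors trapped between the sliding translates --- so as written this is a missing theorem, not a routine step.

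There is also a quantitative failure in ingredient (ii). Your competitor $v_R(x)=u(x+T\chi_R(x)e_n)$ has, in the transition annulus $B_R\setminus B_{R-1}$, Lipschitz constant of order $T\|\nabla\chi_R\|_\infty\|\nabla u\|_\infty\sim T(R)$, where $T(R)$ is dictated by the (completely unquantified) rate at which $u(\cdot+te_n)\to\omega^+$. The short-range kinetic energy of the annulus is then of order
\begin{equation*}
R^{n-1}\int_{|z|\le\kappa_i}\min\bigl\{C,\,C\,T(R)^2|z|^2\bigr\}\,|z|^{-n-\alpha_i}\,dz \;\sim\; T(R)^{\alpha_i}\,R^{n-1},
\end{equation*}
which is not $O(R^{n-1})$. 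This can be repaired by interpolating to the constant $\omega^+$ rather than to the shifted solution (so the competitor has bounded gradient, and $\tilde H(\omega^+)=0$ handles the potential term on $B_{R-1}$), but even then you are still blocked by the unproved minimality. The paper's integration-in-$t$ argument is precisely what makes the dependence on $T$ disappear, and I would recommend adopting it.
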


\begin{proof}
Consider the shift function $u_i^t( x):=u_i( x',x_n+t)$ for $( x',x_n)\in\mathbb R^{n}$ and $t\in\mathbb R$. It is straightforward to see that $u^t=(u^t_i)_{i=1}^m$ is a solution of (\ref{main}).  Consider the energy functional for the shift function $u^t$
\begin{eqnarray}
\mathcal E_J(u^t,B_R) &=& \mathcal K_J(u^t,B_R) - \int_{B_R} \tilde H(u^t) dx  
\\&=& \frac{1}{4} \sum_{i=1}^m \int_{B_R} \int_{B_R}   |  u^t_i(x) -u^t_i(y) |^2 J_i(x-y) dy dx \\&&+ \frac{1}{2} \sum_{i=1}^m \int_{B_R} \int_{\mathbb{R}^n\setminus B_R}  |  u^t_i(x) -u^t_i(y) |^2 J_i(x-y) dy dx - \int_{B_R} \tilde H(u^t) dx, 
\end{eqnarray}
where $R>R^*$.  We now differentiate the energy functional  in terms of parameter $t$ to get
\begin{eqnarray}
\partial_t\mathcal E_J(u^t,B_R) &=& \partial_t \mathcal K_J(u^t,B_R) - \sum_{i=1}^m\int_{B_R}  H_i (u^t) \partial_t u_i^t dx  
\\&=& \frac{1}{2} \sum_{i=1}^m \int_{B_R} \int_{B_R}   [  u^t_i(x) -u^t_i(y) ]  [  \partial_t u^t_i(x) -\partial_t u^t_i(y) ]  J_i(x-y) dy dx \\&&+ \sum_{i=1}^m \int_{B_R} \int_{\mathbb{R}^n\setminus B_R}  [  u^t_i(x) -u^t_i(y) ]  [ \partial_t u^t_i(x) 
- \partial_t u^t_i(y) ] J_i(x-y) dy dx \\&&  - \sum_{i=1}^m\int_{B_R}  H_i (u^t) \partial_t u_i^t dx   . 
\end{eqnarray}
Straightforward computations show that 
\begin{eqnarray}
\partial_t\mathcal E_J(u^t,B_R) &=& \sum_{i=1}^m \int_{B_R} \partial_tu_i(x) L_i( u^t(x)) dx - \sum_{i=1}^m\int_{B_R}  H_i (u^t) \partial_t u_i^t dx  
\\&& + \sum_{i=1}^m  \int_{\mathbb{R}^n\setminus B_R} \int_{B_R}  [  u^t_i(x) -u^t_i(y) ] \left[  \partial_t u^t_i(x) \right] J_i(x-y) dy dx  .
\end{eqnarray}
Since $u^t$ solves (\ref{main}), we can simplify the above as 
\begin{equation}
\partial_t \mathcal E_J(u^t,B_R) =  \sum_{i=1}^m  \int_{\mathbb{R}^n\setminus B_R} \int_{B_R}  [  u^t_i(x) -u^t_i(y) ]  \left[ \partial_t u^t_i(x) \right] J_i(x-y) dy dx    .
\end{equation}
Note that  $ \mathcal E_J(u,B_R)= \mathcal E_J(u^T,B_R)- \int_0^T \partial_t \mathcal E_J(u^t,B_R) dt$ for every $T>0$. Consider disjoint sets of indices $\Gamma$ and $\Lambda$ such that  $\Gamma\cup \Lambda=\{1,\cdots,m\}$ and   $ \partial_t u_\gamma ^t >0> \partial_t u_\lambda ^t$ for $\gamma\in \Gamma$ and $\lambda\in \Lambda$. Then the above reads 
\begin{eqnarray}\label{EKT}
\mathcal E_J(u, B_R) &\le& \mathcal E_J(u^T,B_R) +   \sum_{\gamma}  \int_{\mathbb{R}^n\setminus B_R} \int_{B_R}  \int_0^T  |  u^t_\gamma(x) -u^t_\gamma(y) |  \left[ \partial_t u^t_\gamma(x) \right] J_\gamma(x-y) dt dy dx  
\\&&+ \sum_{\lambda} \int_{\mathbb{R}^n\setminus B_R} \int_{B_R}  \int_0^T   |  u^t_\lambda(x) -u^t_\lambda(y) |  \left[  - \partial_t u^t_\lambda(x) \right] J_\lambda(x-y) dt dy dx  . 
\end{eqnarray}
Note that $|u_i^t(x)- u_i^t(y)| \le C |x-y|$ and $ \lim_{T\to\infty}\mathcal E_J(u^T,B_R)=\mathcal E_J(\omega^+,B_R)=0 $. Therefore, from the boundedness of $u$,  we conclude  
\begin{equation}\label{Pik}
\mathcal E_J(u, B_R) \le  C \sum_{k=1}^3 \sum_{i=1}^m  \iint_{\Pi^k_{R,\kappa_i}}  |x-y|  J_i(x-y)  dy dx =:  C \sum_{k=1}^3 \sum_{i=1}^m  G_{k,i}(R),  
\end{equation}
for domain decompositions   
\begin{equation}
\Pi^1_{R,\kappa_i}:=B_{R-\kappa_i}\times (B_{R+\kappa_i}\setminus B_R), \ \Pi^2_{R,\kappa_i}:=B_{R}\times (\mathbb{R}^n\setminus B_{R+\kappa_i}), \  \Pi^3_{R,\kappa_i}:=(B_{R}\setminus B_{R-\kappa_i})\times(B_{R+\kappa_i}\setminus B_{R})  .  
\end{equation}
Suppose that (\ref{Jumpki})  holds. Note that for $(x,y)$ in  both $\Pi^1_{R,\kappa_i}$ and $\Pi^2_{R,\kappa_i}$,  we have $|x-y|>k_i$. Therefore, $G_{1,i}(R)$ and $G_{2,i}(R)$ are identically zero. We now compute an upper bound for $G_{3,i}(R)$. Straightforward computations show that for $n\ge1$
\begin{equation}\label{IntJ2R}
\int_{ B_{R}\setminus B_{R-\kappa_i}   } \int_{  B_{R+\kappa_i}\setminus B_{R}  }   |x-y|^{1-n-\alpha_i}   dy dx
\le  C(\alpha_i,\kappa_i) R^{n-1} , 
\end{equation}
where $C(\alpha_i,\kappa_i)$ is a positive constant for $0<\alpha_i<2$ and it is given by 
\begin{equation}\label{}
 C(\alpha_i,\kappa_i) := \left\{ \begin{array}{lcl}
\hfill \kappa_i   \ \ \text{for}\ \ \ \alpha_i=1,\\   
\hfill  \left[ \frac{\alpha_i+1}{\alpha_i(2-\alpha_i)} \right] \left[ \frac{2^{1-\alpha_i}-1}{1-\alpha_i} \right]   \kappa_i^{2-\alpha_i}      \ \ \text{for}\ \ \alpha_i\neq 1. 
\end{array}\right.
\end{equation}
This implies that 
\begin{equation}\label{G3R}
G_{3,i}(R) \le C(\alpha_i,\kappa_i) R^{n-1} .
\end{equation}
 We now assume that the jump kernel $J=(J_i)_{i=1}^m$ satisfies  (\ref{Jumpkei})-(\ref{Jumpkeri}). Note that the latter assumption on $G_{3,i}(R)$ holds. We now compute upper bounds for $G_{1,i}(R)$ as 
\begin{equation}
G_{1,i}(R) \le  C  \int_{B_{R+\kappa_i}\setminus B_R} dx  \sum_{k=1}^\infty \int_{k\kappa_i<|z|<2k\kappa_i} |z| J_i(z) dz
\le C \left[\kappa_i^{2-\theta_i} \right] \left[ \sum_{k=1}^\infty k^{1-\theta_i} \right]  R^{n-1}   \le   C \left[\kappa_i^{2-\theta_i}\right] R^{n-1} , 
\end{equation} 
where we have used  $h_i(r)< C r^{-\theta_i}$ for  $\theta_i>2$.    For  $G_{2,i}(R)$, we have 
\begin{eqnarray}
G_{2,i}(R) &\le&   \int_{B_{R}}   \int_{|x-y|>R+\kappa_i- |x|}   |y-x| J_i(y-x)  dy dx 
\\&=& \int_{B_{R}}  \sum_{k=1}^\infty \int_{k(R+\kappa_i- |x|)<|x-y|<2k(R+\kappa_i- |x|)}   |y-x| J_i(y-x)  dy dx 
\\&\le&   \int_{B_{R}}  (R+\kappa_i - |x|)^{1-\theta_i} dx  \left[ \sum_{k=1}^\infty k^{1-\theta_i} \right]\le C R^{n-1} \int_0^R  (R+\kappa_i - r)^{1-\theta_i} dr
\\&=&C\left[ \frac{\kappa_i^{2-\theta_i}}{\theta_i-2}  - \frac{(R+\kappa_i)^{2-\theta_i} }{\theta_i - 2} \right] R^{n-1} \le C\left[ \frac{\kappa_i^{2-\theta_i}}{\theta_i-2} \right] R^{n-1}    ,
\end{eqnarray} 
when $C$ is a positive constant that is independent from $R$.  

\end{proof}
Note that the upper bound in the energy estimate (\ref{EKRn}) is $R^{n-1}$ for all parameters $0<\alpha_i<2$.     However, for jump kernels with intensity (\ref{Jumpi}) we have to consider three different cases $1<\alpha_*<2$, $\alpha_*=1$ and $0<\alpha_*<1$. For the case of fractional Laplacian operator and $m=1$ such an energy estimate is given in \cite{ccinti1,ccinti,cser, psv} and references therein.  For the case of systems, we refer interested readers to \cite{mf2,fs}.  

\begin{thm}\label{thmlayerK2}
Let $n,m\ge 1$. Suppose that $ u=(u_i)_{i=1}^m$ is a bounded $H$-monotone layer solution of (\ref{main}) with  $\tilde H(\omega^+)=0$.  Assume also that the kernel $J_i$ satisfies  (\ref{Jumpi})  when $0<\alpha_i<2$ for all $1\le i\le m$.  Then,  the following energy estimates hold for $R>\kappa^*$. 
\begin{enumerate}
\item[(i)] If $1<\alpha_*<2$, then $\mathcal E_J(u,B_R)  \le  C R^{n-1}$,
\item[(ii)] If  $\alpha_*=1$, then $\mathcal E_J(u,B_R) \le  C R^{n-1}\log R$,
\item[(iii)] If $0<\alpha_*<1$, then $\mathcal E_J(u,B_R)  \le  C R^{n-\alpha_*}$,
\end{enumerate} 
where the positive constant $C$ is independent from $R$ but may depend on $\alpha_i,\kappa_i$. 
\end{thm}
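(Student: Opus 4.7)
I will follow the strategy of Theorem~\ref{thmlayerK1} up to the key comparison inequality, then adapt the final pointwise estimates to handle the unrestricted kernels (\ref{Jumpi}). Introducing the shift $u_i^t(x) := u_i(x',x_n+t)$, differentiating $\mathcal E_J(u^t,B_R)$ in $t$, using $L_i(u_i^t) = H_i(u^t)$ and letting $T \to \infty$ (so that $\mathcal E_J(u^T, B_R) \to 0$ by $\tilde H(\omega^+)=0$), together with $\int_0^\infty |\partial_t u_i^t(x)|\, dt = |\omega_i^+ - u_i(x)| \le 2\|u_i\|_\infty$, I arrive at the reduction
\begin{equation*}
\mathcal E_J(u, B_R) \;\le\; C \sum_{i=1}^m \int_{B_R} \int_{\mathbb R^n \setminus B_R} \min\bigl(|x-y|,\,1\bigr)\, J_i(x-y)\, dy\, dx.
\end{equation*}
Here I have combined the Lipschitz bound $|u_i^t(x)-u_i^t(y)| \le \|\nabla u_i\|_\infty |x-y|$ (useful when $|x-y|$ is small) with the trivial bound $2\|u_i\|_\infty$ (useful when $|x-y|$ is large); this is the point at which the proof departs from Theorem~\ref{thmlayerK1}, since the finite-range cutoff $\kappa_i$ is no longer available to discard the far field.

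For $x \in B_R$, set $d(x) := R - |x|$, so $|x-y| \ge d(x)$ for every $y \notin B_R$. Using $J_i(z) \le C|z|^{-n-\alpha_i}$ from (\ref{Jumpi}) and splitting the inner integral at $|x-y|=1$, polar coordinates centred at $x$ give
\begin{equation*}
\int_{\mathbb R^n \setminus B_R} \min(|x-y|,1)\, J_i(x-y)\, dy \;\le\; C \!\int_{d(x)}^{\min(1,R)}\! r^{-\alpha_i}\, dr \;+\; C \!\int_{\max(d(x),1)}^{\infty}\! r^{-1-\alpha_i}\, dr \;=:\; F_i(d(x)).
\end{equation*}
A direct evaluation yields, for $0 < d < 1$, that $F_i(d) \lesssim 1$ when $\alpha_i < 1$, $F_i(d) \lesssim 1 + |\log d|$ when $\alpha_i = 1$, and $F_i(d) \lesssim d^{1-\alpha_i}$ when $1 < \alpha_i < 2$; for $d \ge 1$ one has $F_i(d) \lesssim d^{-\alpha_i}$ uniformly. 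Since the co-area formula gives $\bigl|\{x \in B_R : d(x) \in [s, s+ds]\}\bigr| \le C R^{n-1}\, ds$, Fubini produces
\begin{equation*}
\mathcal E_J(u, B_R) \;\le\; C R^{n-1} \sum_{i=1}^m \int_0^R F_i(s)\, ds.
\end{equation*}
Evaluating this one-dimensional integral in each of the three regimes yields $\int_0^R F_i(s)\,ds \le C$ when $1<\alpha_i<2$, $\le C \log R$ when $\alpha_i=1$, and $\le C R^{1-\alpha_i}$ when $0<\alpha_i<1$. Summing over $i$ and noting that the slowest decay is governed by $\alpha_*$ gives exactly the three cases (i)--(iii).

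\textbf{Main obstacle.} The delicate point is the regime $\alpha_i \ge 1$, where the integrand $\min(|x-y|,1)\,|x-y|^{-n-\alpha_i}$ is \emph{not} absolutely integrable in $y$ near the diagonal on $\mathbb R^n$. The constraint $y \notin B_R$ is essential: it enforces $|x-y| \ge d(x) > 0$, cutting off the local singularity for each fixed $x$. The subsequent integration over $x$ then has to absorb the blow-up of $F_i(s)$ as $s \to 0^+$, and the surface-type measure $R^{n-1}\, ds$ of the boundary strip $\{d \approx s\}$ is exactly what generates the logarithm when $\alpha_* = 1$ and the extra factor $R^{1-\alpha_*}$ when $\alpha_* < 1$. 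Care is also needed in the $\alpha_i=1$ case to confirm that $\int_0^1 |\log s|\, ds$ is finite, so that no boundary contribution is lost.
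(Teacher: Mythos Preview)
Your proposal is correct and follows essentially the same strategy as the paper: both start from the shift-and-differentiate reduction of Theorem~\ref{thmlayerK1}, use the dichotomy $|u_i^t(x)-u_i^t(y)|\le C\min(|x-y|,\mathrm{const})$, and then estimate the resulting integral over $B_R\times(\mathbb R^n\setminus B_R)$ in terms of the distance to $\partial B_R$. The only cosmetic difference is bookkeeping---the paper splits the integration domain into three pieces $\Pi^k_{R,\kappa_i}$ and evaluates $\int_0^R (R+\kappa_i-r)^{-\alpha_i}\,dr$ directly, whereas you parametrize by $d(x)=R-|x|$ via the co-area formula; both routes produce the same one-dimensional integral whose behaviour in the three regimes $\alpha_*\gtrless 1$ yields the stated bounds.
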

\begin{proof}
The proof is similar to the one of Theorem \ref{thmlayerK1}.   We only need to provide an upper bound for the right-hand side of (\ref{EKT}).  Note that $|u_i^t(x)- u_i^t(y)| \le C \min\{\kappa_i, |x-y|\}$. From the boundedness of $u$ we have  
\begin{eqnarray}
\mathcal E_J(u, B_R) &\le&   C  \sum_{i=1}^m  \iint_{(\mathbb{R}^n\setminus B_R )\times B_R  } \min\{\kappa_i, |x-y|\} J_i(x-y)  dy dx   
\\&\le &  C \sum_{k=1}^3 \sum_{i=1}^m  \iint_{\Pi^k_{R,\kappa_i}}  \min\{\kappa_i, |x-y|\} J_i(x-y)  dy dx =:  C \sum_{k=1}^3 \sum_{i=1}^m  L_{k,i}(R)   .  
\end{eqnarray}
Note that an upper bound for $L_{3,i}(R)$, the integral on $\Pi_{R,\kappa_i}^3$,  is similar to the one given by (\ref{G3R}) that is 
\begin{equation}\label{L3R}
L_{3,i}(R) \le C(\alpha_i,\kappa_i) R^{n-1} .
\end{equation}
Note that for subdomains $\Pi^1_{R,\kappa_i}$ and $\Pi^2_{R,\kappa_i}$, we have $\Pi^1_{R+\kappa_i,\kappa_i}\subset \Pi^2_{R,\kappa_i}$. Therefore, we only provide an upper bound for $L_{2,i}(R) $ as 
\begin{eqnarray}
L_{2,i}(R) &=&   \kappa_i \int_{B_R}    \int_{ \mathbb{R}^n\setminus B_{R+\kappa_i}(x) }   |z|^{-n-2s_i}   dz dx
\\&\le&  \kappa_i  \int_{B_{R}   } \int_{R+\kappa_i-|x|}^{\infty}   r^{-1-\alpha_i}   dr dx
\\&\le&   \frac{\kappa_i }{\alpha_i} \int_{B_{R}   }  (R +\kappa_i- |x|)^{-\alpha_i} dx   \le    \frac{\kappa_i}{\alpha_i} R^{n-1} \int_{0}^{R}     (R +\kappa_i- r)^{-\alpha_i} dr   . 
\end{eqnarray}
From this we get 
\begin{equation}\label{IntJ2R}
 L_{2,i}(R)
\le  C   \left\{ \begin{array}{lcl}
\hfill \kappa_i\log \left(\frac{R}{\kappa_i} \right)  R^{n-1}   &\text{for}&  \alpha_i=1,\\   
\hfill \frac{R_i}{\alpha_i(1-\alpha_i)} [R^{1-\alpha_i}- \kappa^{1-\alpha_i}_i] R^{n-1}   &\text{for}&  \alpha_i\neq 1 . 
\end{array}\right.
\end{equation}
This completes the proof. 

\end{proof}

\subsection{Liouville theorem for systems}
In this part, we provide a Liouville theorem for the quotient $\sigma_i:=\frac{\psi_i}{\phi_i}$ when $\psi_i:=\nabla u_i\cdot \nu$ for $\nu(x)=\nu(x',0):\RR^{n-1}\to \RR $ and $\phi=(\phi_i)_{i=1}^m$ solves the linearized system (\ref{L}).  In what follows, we first show that $\sigma=(\sigma_i)_{i=1}^m$ and $\phi=(\phi_i)_{i=1}^m$ satisfy an integral inequality.  Note that this is a counterpart of the equation (\ref{divphisigma}) for the case of local equations. %For the case of local systems, we refer interested readers to \cite{}. 

\begin{lemma}\label{lemlinear}
Suppose that $u=(u_i)_{i=1}^m$ is a stable solution of symmetric system (\ref{main}) with a nonnegative, even and measurable kernel $J_i$. Then, for  $\eta\in C_c^1(\mathbb R^n)$  
\begin{equation}\label{sigmalin}
\sum_{i=1}^m   \int_{\RR^n} \int_{\RR^n} \sigma_i(x) [\sigma_i(x)- \sigma_i(z)] \phi_i(z) \phi_i(x)  J_i(x-z) \eta^2(x)dz dx \le 0 . 
%\\&&= \sum_{i,j=1}^m  \int_{\RR^n} \partial_j H_i(u) \phi_i(x)  \phi_j(x) [\sigma_j(x) -\sigma_i(x)] \sigma_i(x) \eta^2 (x)dx.
\end{equation}
 %Here, $\sigma_i:=\frac{\psi_i}{\phi_i}$ for $\psi_i:=\nabla u_i\cdot \nu$ when $\nu(x)=\nu(x',0):\RR^{n-1}\to \RR $.    
\end{lemma}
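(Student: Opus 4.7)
The plan is to combine the two linearized equations satisfied by $\psi_i$ and $\phi_i$ and then invoke the AM--GM/symmetry trick that underlies the stability inequality of Proposition \ref{stabineq}. Since $\nu$ depends only on $x'$, differentiating (\ref{main}) in the direction $\nu$ yields $L_i(\psi_i) = \sum_j \partial_j H_i(u)\,\psi_j$, so $\psi_i$ satisfies the same linearized system as $\phi_i$. Subtracting $\sigma_i$ times the equation for $\phi_i$ from that for $\psi_i$ gives the pointwise identity
\begin{equation*}
L_i(\psi_i) - \sigma_i L_i(\phi_i) = \sum_j \partial_j H_i(u)\,\phi_j(\sigma_j - \sigma_i).
\end{equation*}
Multiplying by $\phi_i \sigma_i \eta^2$, integrating, and summing over $i$, the right-hand side becomes
\begin{equation*}
\sum_{i,j}\int_{\RR^n} \partial_j H_i(u)\,\phi_i\phi_j\,\sigma_i(\sigma_j-\sigma_i)\,\eta^2\,dx.
\end{equation*}

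Exactly as in the proof of Proposition \ref{stabineq}, pairing the $(i,j)$ and $(j,i)$ terms for $i<j$ and using the symmetry of $\mathbb{H}$, the sign condition $\partial_j H_i(u)\,\phi_i\phi_j > 0$ from Definition \ref{stable}, and the inequality $\sigma_i^2 + \sigma_j^2 \ge 2\sigma_i\sigma_j$, this double sum is seen to be nonpositive. Therefore
\begin{equation*}
\sum_i \int_{\RR^n} \phi_i \sigma_i \eta^2\,L_i(\psi_i)\,dx \le \sum_i \int_{\RR^n} \phi_i \sigma_i^2 \eta^2\,L_i(\phi_i)\,dx.
\end{equation*}

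To conclude, I would apply Lemma \ref{fgprop} to both sides to rewrite them as symmetric bilinear forms. A direct algebraic computation, using $\psi_i = \sigma_i\phi_i$, shows that the per-$i$ difference of the two integrands telescopes to
\begin{equation*}
\phi_i(x)\phi_i(z)\,[\sigma_i(x)-\sigma_i(z)]\,[\sigma_i(x)\eta^2(x) - \sigma_i(z)\eta^2(z)],
\end{equation*}
and a swap $x \leftrightarrow z$ (using the evenness of $J_i$) identifies the resulting bilinear form with twice the integrand appearing on the left-hand side of (\ref{sigmalin}). The main obstacle I anticipate is justifying the use of $\sigma_i = \psi_i/\phi_i$ as a multiplier where $\phi_i$ may vanish: one would either restrict integration to $\{\phi_i \neq 0\}$ (permissible since $\phi_i$ does not change sign by Definition \ref{stable}) or regularize by $\phi_i + \varepsilon\,\text{sgn}(\phi_i)$ and pass to the limit, while also checking that the integrals produced by Lemma \ref{fgprop} converge absolutely given the compact support of $\eta$ and the standing regularity of $u$.
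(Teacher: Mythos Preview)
Your proposal is correct and follows essentially the same approach as the paper: derive the pointwise identity $L_i(\psi_i)-\sigma_i L_i(\phi_i)=\sum_j\partial_j H_i(u)\,\phi_j(\sigma_j-\sigma_i)$, multiply by $\phi_i\sigma_i\eta^2$, integrate, and use the symmetry of $\mathbb{H}$ together with the sign condition $\partial_j H_i(u)\,\phi_i\phi_j>0$ to make the right-hand side nonpositive. The only cosmetic difference is that the paper applies the \emph{product-rule} identity of Lemma~\ref{fgprop} pointwise to rewrite $L_i(\sigma_i\phi_i)-\sigma_i L_i(\phi_i)=\int_{\RR^n}[\sigma_i(x)-\sigma_i(z)]\phi_i(z)J_i(x-z)\,dz$ first, which immediately yields the left-hand side of (\ref{sigmalin}) after multiplication and integration and spares your algebraic telescoping step.
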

\begin{proof}
Since $u$ is a stable solution,   there exists a sequence of functions $\phi=(\phi_i)_{i=1}^m$  such that 
\begin{equation}\label{phi}
L_i(\phi_i(x))= \sum_{j=1}^m \partial_j H_i(u) \phi_j  . 
\end{equation}
Let $\nu(x)=\nu(x',0):\RR^{n-1}\to \RR $ and $\psi_i:=\nabla u_i\cdot \nu$.  Differentiating (\ref{main}) with respect to $x$,  we get 
\begin{equation}\label{psi}
L_i(\psi_i(x))= \sum_{j=1}^m \partial_j H_i(u) \psi_j. 
\end{equation}
Since $\psi_i=\sigma_i\phi_i$, from (\ref{psi}) we have 
\begin{equation}\label{sigmaphi}
L_i(\sigma_i(x)(\phi_i(x))= \sum_{j=1}^m \partial_j H_i(u) \sigma_j(x)\phi_j(x). 
\end{equation}
Multiply (\ref{phi}) with $-\sigma_i$ and add with (\ref{sigmaphi}) to get 
\begin{equation}\label{LL}
L_i(\sigma_i(x)(\phi_i(x))- \sigma_i L_i(\phi_i(x))= \sum_{j=1}^m \partial_j H_i(u) \phi_j(x)(\sigma_j(x)-\sigma_i(x)). 
\end{equation}
Applying Lemma \ref{fgprop}, we get 
\begin{equation}\label{Lphisigma}
\phi_i(x) L(\sigma_i) - \int_{\RR^n} [\sigma_i(x)-\sigma_i(z)] [\phi_i(x)-\phi_i(z)] J_i(x-z) dz = \sum_{j=1}^m \partial_j H_i(u) \phi_j(x)(\sigma_j(x)-\sigma_i(x)). 
\end{equation}  
Note that the left-hand side of the above equality can be simplified as 
\begin{equation}\label{}
 \int_{\RR^n}  [\sigma_i(x)-\sigma_i(z)] \phi_i(z) J_i(x-z) dz.
\end{equation}  
Multiplying (\ref{Lphisigma}) with $\sigma_i(x) \phi_i(x) \eta^2(x)$ and integrating,  we  get 
\begin{eqnarray}\label{lineareqsum} 
&&\sum_{i=1}^m   \int_{\RR^n} \int_{\RR^n} \sigma_i(x) [\sigma_i(x)- \sigma_i(z)] \phi_i(z) \phi_i(x)  K_i(x-z) \eta^2(x)dz dx 
\\&= & \sum_{i,j=1}^m  \int_{\RR^n}  \partial_j H_i(u) \phi_j(x) \phi_i(x) \sigma_i(x) [\sigma_j(x) -\sigma_i(x)] \eta(x)dx.
\end{eqnarray}
Note that for symmetric systems we have
\begin{eqnarray}
 \sum_{i,j} \phi_i \phi_j \partial_j H_i(u) \sigma_i (\sigma_j-\sigma_i)&=& \sum_{i< j}   \phi_i \phi_j \partial_j H_i(u)  \sigma_i (\sigma_j-\sigma_i) + \sum_{i> j}  \phi_i \phi_j \partial_j H_i(u)  \sigma_i  (\sigma_j-\sigma_i)  \\&=&\sum_{i< j}  \phi_i \phi_j \partial_j H_i(u)   \sigma_i (\sigma_j-\sigma_i)  + \sum_{i< j}  \phi_i \phi_j \partial_j H_i(u)  \sigma_j (\sigma_i-\sigma_j)  \\&=&-\sum_{i< j}  \phi_i \phi_j \partial_j H_i(u)  (\sigma_j-\sigma_i)^2 \le 0. 
  \end{eqnarray}
This completes the proof.  
 
\end{proof}

We now provide a Liouville theorem for  $\sigma=(\sigma_i)_{i=1}^m$ and $\phi=(\phi_i)_{i=1}^m$ satisfying (\ref{sigmalin}).  Note that for the case of scalar equations, $m=1$, this is given by Hamel et al.  in \cite{hrsv}.

\begin{prop}\label{thmlione}
Let $n,m\ge 1$. Suppose that $\sigma=(\sigma_i)_{i=1}^m$ and $\phi=(\phi)_{i=1}^m$ satisfy (\ref{sigmalin}) and each $\phi_i$ does not change sign.   Assume also that  $J=(J_i)_{i=1}^m$ is a sequence of measurable, nonnegative and even  kernels such that 
\begin{equation}
\sum_{i=1}^m   \iint_{ \{\cup_{k=1}^4 \Gamma^k_R \}} [\sigma_i(x) +  \sigma_i(z)]^2  \phi_i(x) \phi_i(z) |x-z|^2 J_i(x-z)   dz dx \le C R^2 , 
 \end{equation}
where $\Gamma^k_R$ are given in (\ref{gamma2}). Then, each $\sigma_i$ must be constant. 
\end{prop}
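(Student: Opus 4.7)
The plan is to symmetrize the integral inequality (\ref{sigmalin}) and then derive a self-improving estimate that forces each $\sigma_i$ to be constant, by combining a logarithmic cutoff with a dyadic decomposition of the transition region at which the hypothesis is applied at every scale.

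First, I would exploit the evenness of each $J_i$ to symmetrize (\ref{sigmalin}): swap $x$ and $z$, add the two resulting inequalities, and rewrite using the algebraic identity $a\alpha^2 - b\beta^2 = \tfrac{1}{2}(a-b)(\alpha^2+\beta^2) + \tfrac{1}{2}(a+b)(\alpha^2-\beta^2)$ applied with $a=\sigma_i(x)$, $b=\sigma_i(z)$, $\alpha=\eta(x)$, $\beta=\eta(z)$. Writing $T_i(x,z) := \phi_i(x)\phi_i(z) J_i(x-z)$, which is nonnegative because each $\phi_i$ does not change sign, this yields
\begin{equation*}
\tfrac{1}{2}\sum_i \iint [\sigma_i(x)-\sigma_i(z)]^2 [\eta^2(x)+\eta^2(z)] T_i\, dx\, dz \le -\tfrac{1}{2}\sum_i \iint [\sigma_i(x)+\sigma_i(z)][\sigma_i(x)-\sigma_i(z)][\eta^2(x)-\eta^2(z)] T_i\, dx\, dz.
\end{equation*}
Factoring $\eta^2(x)-\eta^2(z) = [\eta(x)+\eta(z)][\eta(x)-\eta(z)]$ and applying Cauchy--Schwarz to the right-hand side, with $[\eta(x)+\eta(z)]^2 \le 2[\eta^2(x)+\eta^2(z)]$ used to absorb one factor back into the left, I would arrive at
\begin{equation*}
\sum_i \iint [\sigma_i(x)-\sigma_i(z)]^2 [\eta^2(x)+\eta^2(z)] T_i\, dx\, dz \;\le\; C \sum_i \iint [\sigma_i(x)+\sigma_i(z)]^2 [\eta(x)-\eta(z)]^2 T_i\, dx\, dz.
\end{equation*}

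The crucial step is to choose $\eta$ so that the right-hand side vanishes in the limit. A linear cutoff on $B_{2R}\setminus B_R$ with $|\nabla \eta_R|\le C/R$ combined with the growth hypothesis would only produce a bound uniform in $R$ (finite energy), which is insufficient. Instead I would use the logarithmic cutoff $\eta_R = 1$ on $B_R$, $\eta_R = 0$ outside $B_{R^2}$, equal to $(2\log R - \log|x|)/\log R$ in between, so that $|\nabla \eta_R(x)|\le C/(|x|\log R)$ and
\begin{equation*}
[\eta_R(x)-\eta_R(z)]^2 \;\le\; \frac{C\, |x-z|^2}{\min(|x|,|z|)^2 \log^2 R}
\end{equation*}
on the transition region. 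I would decompose $B_{R^2}\setminus B_R$ into dyadic annuli $A_k := B_{2^{k+1}R}\setminus B_{2^k R}$, $k = 0,\ldots, \lceil \log_2 R\rceil$, and apply the hypothesis at each scale $R_k := 2^k R$, using the $x\leftrightarrow z$ symmetry of the integrand to bound pairs with one coordinate in $A_k$ by integrals over $\cup_{j=1}^4 \Gamma^j_{R_k}$. On $A_k$ one has $|\nabla\eta_R|^2 \le C/(2^{2k}R^2\log^2 R)$, so the $k$-th annulus contributes at most $C/(2^{2k}R^2\log^2 R)\cdot C\,2^{2k}R^2 = C/\log^2 R$; summing over the $O(\log R)$ dyadic scales yields a total of at most $C/\log R$.

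Letting $R\to\infty$ and applying Fatou's lemma, the left-hand side converges to $\sum_i \iint [\sigma_i(x)-\sigma_i(z)]^2 \phi_i(x)\phi_i(z) J_i(x-z)\, dx\, dz$, which must therefore vanish. Since each $\phi_i$ does not change sign, $\phi_i(x)\phi_i(z)\ge 0$ everywhere, so $\sigma_i(x)=\sigma_i(z)$ for a.e.\ pair $(x,z)$ in the support of $J_i$; by connectivity of $\mathbb R^n$ (together with the nontriviality of $J_i$ near the origin) this forces each $\sigma_i$ to be constant. The main technical hurdle is precisely step three: the linear cutoff only gives a finite-energy bound, so it is essential to use the logarithmic cutoff together with the dyadic multiscale application of the hypothesis to turn the $CR^2$ growth into the $1/\log R$ decay that drives the left-hand side to zero.
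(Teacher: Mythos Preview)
Your symmetrization and Cauchy--Schwarz step are the same as the paper's, but the two proofs diverge at the cutoff choice, and your assessment that the linear cutoff is ``insufficient'' is not correct. The paper \emph{does} use the standard linear cutoff $\eta=1$ on $B_R$, $\eta=0$ outside $B_{2R}$, and closes the argument by a self-improvement trick that you short-circuited. Writing $\Upsilon$ for the left-hand side and applying Cauchy--Schwarz \emph{only over the support} $\cup_{k=1}^4\Gamma^k_R$ of $\eta(x)-\eta(z)$, the paper obtains
\[
\Upsilon^2 \;\le\; \Upsilon(R)\,\Theta(R),
\]
where $\Upsilon(R)$ is the same integrand as $\Upsilon$ restricted to $\cup_{k}\Gamma^k_R$, and $\Theta(R)$ is your right-hand side. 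The hypothesis gives $\Theta(R)\le C$, and trivially $\Upsilon(R)\le\Upsilon$, so $\Upsilon\le C$ uniformly in $R$; letting $R\to\infty$ yields that the full energy $E:=\sum_i\iint[\sigma_i(x)-\sigma_i(z)]^2\phi_i(x)\phi_i(z)J_i$ is finite. With $E<\infty$, one then observes that $\cup_{k=1}^4\Gamma^k_R$ avoids $B_R\times B_R$, so $\Upsilon(R)\to 0$ by dominated convergence; feeding this back into $\Upsilon^2\le C\,\Upsilon(R)$ forces $\Upsilon\to 0$, hence $E=0$. The point you missed is \emph{not} to absorb $\Upsilon(R)$ into $\Upsilon$ immediately: keeping it separate is what makes the two-step bootstrap work.

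Your logarithmic cutoff with dyadic application of the hypothesis is a genuine alternative and, with the case analysis filled in (in particular the boundary cases $x\in B_R,\ z\in B_{R^2}\setminus B_R$ and $x\in B_R,\ z\notin B_{R^2}$, which you do not discuss and which require applying the hypothesis at the single scale $R$ rather than dyadically), it does give the $C/\log R$ decay you claim. Its advantage is that it yields decay in one pass without a bootstrap; its cost is the multiscale bookkeeping and the extra boundary cases. The paper's route is shorter and uses nothing beyond the single-scale hypothesis plus finite-energy exhaustion.
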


  \begin{proof} From Lemma \ref{lemlinear},   we have
    \begin{equation}\label{etasigmale0}
  \sum_{i=1}^m   \int_{\RR^n} \int_{\RR^n}  [\eta^2(x) \sigma_i(x) -  \eta^2(z) \sigma_i(z)]    [\sigma_i(x)- \sigma_i(z)] \phi_i(x) \phi_i(z)  J_i(x-z)   dz dx \le 0   , 
  \end{equation}
for a test function $\eta\in C_c^1(\mathbb R^n).$ Note that 
\begin{equation}\label{etaiden}
 2[\eta^2(x) \sigma_i(x) -  \eta^2(z) \sigma_i(z)] = [\sigma_i(x) -  \sigma_i(z)][\eta^2(x) + \eta^2(z)] 
 + [\sigma_i(x) +  \sigma_i(z)][\eta^2(x) - \eta^2(z)] .
\end{equation}
We now set to be the standard test function that is $\eta=1$ in $\overline {B_R}$ and $\eta=0$ in $\overline{\RR^n\setminus B_{2R}}$ with $||\nabla \eta||_{L^{\infty}(B_{2R}\setminus B_R)}\le C R^{-1}$. Combining (\ref{etasigmale0}) and (\ref{etaiden}), we get 
\begin{eqnarray}
0\le \Upsilon&:=& \sum_{i=1}^m   \int_{\RR^n} \int_{\RR^n}  [\sigma_i(x) -  \sigma_i(z)]^2 [\eta^2(x) + \eta^2(z)] \phi_i(x) \phi_i(z)  J_i(x-z)   dz dx 
\\&\le &    \sum_{i=1}^m   \int_{\RR^n} \int_{\RR^n}  [\sigma^2_i(z) -  \sigma^2_i(x)] [\eta^2(x) - \eta^2(z)]  \phi_i(x) \phi_i(z)  J_i(x-z)   dz dx    . 
\end{eqnarray}
Note also  that 
\begin{equation}\label{etaiden1}
 [\sigma^2_i(z) -  \sigma^2_i(x)] [\eta^2(x) - \eta^2(z)] = [\sigma_i(z) -  \sigma_i(x)] [\sigma_i(z) +  \sigma_i(x)][\eta(x) - \eta(z)] [\eta(x) + \eta(z)]   . 
 \end{equation}
Applying similar arguments as in the proof of Theorem \ref{liopo} and using the Cauchy-Schwarz inequality, we conclude  
\begin{eqnarray}
\Upsilon^2&\le & \left(\sum_{i=1}^m   \iint_{  \{\cup_{k=1}^4\Gamma^k_R \} \ }  [\sigma_i(x) -  \sigma_i(z)]^2 [\eta^2(x) + \eta^2(z)] \phi_i(x) \phi_i(z)  J_i(x-z)   dz dx  \right)
\\& & \left( \sum_{i=1}^m     \iint_{ \{\cup_{k=1}^4\Gamma^k_R\}  }  [\sigma_i(x) +  \sigma_i(z)]^2 [\eta(x) - \eta(z)]^2 \phi_i(x) \phi_i(z)  J_i(x-z)   dz dx \right)  
\\& =:& \Upsilon(R) \Theta(R) ,
\end{eqnarray}
where domain decompositions $\Gamma^k_R$  are set in (\ref{gamma2}).   From the definition of $\eta$,  for $(x,z)$ in $\{\cup_{k=1}^4\Gamma^k_R \} $    we have 
  \begin{equation}
 (\eta(x)-\eta(z))^2 \le C R^{-2} |x-z|^2  .    
   \end{equation}
Note that $\Upsilon(R)\le \Upsilon$ and from the assumptions we have 
 \begin{equation}
\Theta(R) \le R^{-2} \sum_{i=1}^m   \iint_{\cup_{k=1}^4 \Gamma^k_R} [\sigma_i(x) +  \sigma_i(z)]^2  \phi_i(x) \phi_i(z) |x-z|^2 J_i(x-z)   dz dx \le C. 
   \end{equation}
This implies that $0\le \Upsilon\le C$ and then $\Upsilon(R)\le C$. Therefore, $\Upsilon=0$.  This completes the proof. 
   
    \end{proof}
 
 We now provide a one-dimensional symmetry result for bounded $H$-monotone solutions of  system (\ref{main}) under certain assumptions on sequences $J=(J_i)_{i=1}^m$,  $\phi=(\phi_i)_{i=1}^m$ and $\sigma=(\sigma_i)_{i=1}^m$.

  \begin{thm}\label{thmapp} Suppose that $u=(u_i)_{i=1}^m$  is a bounded $H$-monotone solution of  system (\ref{main}).  Let $\phi_i:=\partial_{x_n}u_i$,  $\psi_i:=\nabla u_i\cdot \nu$ for $\nu(x)=\nu(x',0):\RR^{n-1}\to \RR $,  $\sigma_i:=\frac{\psi_i}{\phi_i}$ and the following holds
  \begin{equation}\label{sigmaR2}
\sum_{i=1}^m   \iint_{ \{\cup_{k=1}^4 \Gamma^k_R \}} [\sigma_i(x) +  \sigma_i(z)]^2  \phi_i(x) \phi_i(z) |x-z|^2 J_i(x-z)   dz dx \le C R^2 , 
 \end{equation}
when $\Gamma^k_R$ are given in (\ref{gamma2}).  Then,  each $u_i$ must be a one-dimensional function for $i=1,\cdots,m$.  
  \end{thm}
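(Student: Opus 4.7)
The strategy is to reduce to the Liouville statement in Proposition~\ref{thmlione}, applied to the ratio $\sigma_i=\psi_i/\phi_i$, and then recover one-dimensional symmetry by letting the horizontal direction $\nu$ vary over a basis of $\mathbb R^{n-1}$.

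First I would verify the linearization setup. Since $u$ is $H$-monotone, each $\phi_i=\partial_{x_n}u_i$ is sign-definite, and differentiating (\ref{main}) in the $x_n$ direction shows that $\phi=(\phi_i)_{i=1}^m$ solves the linearized system (\ref{L}); in particular $u$ is stable in the sense of Definition~\ref{stable}. For any fixed horizontal $\nu=\nu(x',0):\mathbb R^{n-1}\to\mathbb R$, differentiating (\ref{main}) along $\nu$ gives that $\psi=(\psi_i)_{i=1}^m$ with $\psi_i=\nabla u_i\cdot\nu$ satisfies the same linearized system. This is exactly the setting of Lemma~\ref{lemlinear}, so the ratio $\sigma_i=\psi_i/\phi_i$ satisfies the integral inequality (\ref{sigmalin}) for every test function $\eta\in C_c^1(\mathbb R^n)$.

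Next, the hypothesis (\ref{sigmaR2}) is tailored to be the exact growth control required by Proposition~\ref{thmlione}, and each $\phi_i$ is sign-definite. Hence Proposition~\ref{thmlione} applies and forces each $\sigma_i$ to be constant on $\mathbb R^n$, i.e.\ $\nabla u_i\cdot\nu\equiv c_i(\nu)\,\partial_{x_n}u_i$ for some scalar $c_i(\nu)$ depending on $\nu$ and on $i$. Since $\nu$ was an arbitrary direction in $\mathbb R^{n-1}$, choosing $\nu=e_k$ for $k=1,\dots,n-1$ yields $\partial_{x_k}u_i=c_i^{(k)}\,\partial_{x_n}u_i$ everywhere. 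Consequently $\nabla u_i$ is everywhere parallel to the fixed vector $(c_i^{(1)},\dots,c_i^{(n-1)},1)$, so each $u_i$ depends on a single linear combination of the coordinates and is therefore one-dimensional.

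The main obstacle I anticipate is the applicability of Lemma~\ref{lemlinear}, which was proved under the symmetric-system hypothesis of Definition~\ref{symmetric}; Theorem~\ref{thmapp} does not explicitly list this assumption, so one either reads symmetry in implicitly or must verify that the $H$-monotonicity sign structure alone suffices to drive the cross terms $\sum_{i<j}\phi_i\phi_j\,\partial_jH_i(u)(\sigma_j-\sigma_i)^2$ to the correct sign without requiring $\partial_iH_j=\partial_jH_i$. A secondary technical point is justifying that directional differentiation along $\nu$ commutes with the nonlocal operator $L_i$, which is routine given $u_i\in C^1$, $|\nabla u_i|\in L^\infty$, and the integrability of $J_i$ supplied by the finite-range or decaying kernel assumptions used elsewhere in the paper.
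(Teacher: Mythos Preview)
Your approach is exactly the paper's: the proof there is the single sentence ``direct consequence of Lemma~\ref{lemlinear} and Proposition~\ref{thmlione},'' and you have correctly unpacked that into the linearization step, the Liouville step, and the variation over horizontal directions $\nu$. Your observation that the symmetric-system hypothesis is needed for Lemma~\ref{lemlinear} but is not stated in Theorem~\ref{thmapp} is well taken; the paper appears to carry it implicitly from the surrounding context.
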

  
  \begin{proof}
  The proof is a direct consequence of Lemma \ref{lemlinear} and Proposition \ref{thmlione}.  
  \end{proof}
 
 \begin{cor} Suppose that $u=(u_i)_{i=1}^m$  is a bounded $H$-monotone solution of   (\ref{main}) in two dimensions when the jump kernel $J=(J_i)_{i=1}^m$ satisfies either (\ref{Jumpki}) or  (\ref{Jumpkei})-(\ref{Jumpkeri}) with $h_i(r)< C r^{-\theta_i}$ when $\theta_i>3$.   Assume also that the following Harnack inequality holds for each $\phi_i$ 
 \begin{equation}\label{harn}
 \sup_{B_1(x_0)} \phi_i \le C_i \inf_{B_1(x_0)} \phi_i, \ \ \text{for all} \ \ x_0\in\mathbb R^n. 
 \end{equation}
 Then,  each $u_i$ must be a one-dimensional function for $i=1,\cdots,m$.  
 \end{cor}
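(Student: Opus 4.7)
The plan is to invoke Theorem \ref{thmapp}: the corollary will follow once we verify the quantitative bound (\ref{sigmaR2}). Set $\phi_i := \partial_{x_n} u_i$ and $\psi_i := \nabla u_i \cdot \nu$. $H$-monotonicity ensures $\phi_i$ has definite sign, and $|\nabla u_i| \in L^\infty$ forces $|\phi_i|, |\psi_i| \le C$. Using the defining identity $\sigma_i \phi_i = \psi_i$, one rewrites the integrand as
\[
[\sigma_i(x)+\sigma_i(z)]^2 \phi_i(x)\phi_i(z) = \psi_i(x)^2 \frac{\phi_i(z)}{\phi_i(x)} + 2\psi_i(x)\psi_i(z) + \psi_i(z)^2 \frac{\phi_i(x)}{\phi_i(z)},
\]
so (\ref{sigmaR2}) is governed by the ratios $\phi_i(z)/\phi_i(x)$ and the uniformly bounded cross term $\psi_i(x)\psi_i(z)$.

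The first step is to use the Harnack inequality (\ref{harn}): a single application gives $\phi_i(z)/\phi_i(x)\le C_i$ whenever $|x-z|\le 1$, and chaining over $\lceil|x-z|\rceil$ overlapping unit balls yields $\phi_i(z)/\phi_i(x)\le C_i^{\lceil|x-z|\rceil+1}$. In particular, on $\{|x-z|\le \kappa_i\}$ this ratio is bounded by a constant depending only on $C_i$ and $\kappa_i$. Combining this with the annulus-by-annulus computation $\iint_{\cup_k \Gamma^k_R,\ |x-z|\le \kappa_i} |x-z|^2 J_i(x-z)\,dx\,dz = O(R^{n-1}) = O(R)$, which is the same calculation already carried out in Steps 1--4 of the proofs of Theorem \ref{liopo} and Theorem \ref{thmlayerK1}, disposes of the near region. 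For the finite-range kernel (\ref{Jumpki}) this completes the verification, since $J_i$ is supported in $\{|x-z|\le \kappa_i\}$.

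The second step handles the far region $\{|x-z|>\kappa_i\}$, which is non-vacuous only for the decay kernel (\ref{Jumpkei})--(\ref{Jumpkeri}). There, the bounded cross term contributes at most $C\iint |x-z|^2 J_i(x-z)\,dx\,dz$, and the tail estimate $\int_{|y|>r}|y|^2 J_i(y)\,dy\le C r^{2-\theta_i}$ combined with $\theta_i>3$ and integration over $B_{2R}$ yield an $O(R^{n+2-\theta_i})=O(R^{4-\theta_i})$ contribution, hence $O(R)$. The main obstacle is the remaining non-cross terms $\psi_i(x)^2\phi_i(z)/\phi_i(x)$ on the far region, because iterated Harnack produces only an exponential bound $\phi_i(z)/\phi_i(x)\le C_i^{|x-z|+1}$ that does not directly pair with the polynomial decay of $J_i$. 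This is resolved by exploiting the symmetry of the integrand together with the uniform boundedness of $\phi_i$ and the symmetrization $\frac{\phi_i(z)}{\phi_i(x)}+\frac{\phi_i(x)}{\phi_i(z)}$: the ratio cannot be simultaneously large in both directions for a fixed pair, so a dyadic decomposition of the far region, combined with the decay rate $\theta_i>3$, absorbs the Harnack constants into constants depending only on $\kappa_i$, $C_i$, $\theta_i$. Summing the contributions from the near and far regions yields $(\ref{sigmaR2})$ with bound $O(R^2)$, and Theorem \ref{thmapp} then delivers the one-dimensional symmetry of each $u_i$.
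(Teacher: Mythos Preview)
Your overall strategy matches the paper's: bound the integrand $[\sigma_i(x)+\sigma_i(z)]^2\phi_i(x)\phi_i(z)$ by a constant times $\dfrac{\phi_i(z)}{\phi_i(x)}+\dfrac{\phi_i(x)}{\phi_i(z)}$ using $|\psi_i|\le C$, control this ratio via the Harnack inequality, and then quote the kernel estimates already computed in Theorem~\ref{liopo}. For the finite-range kernel~(\ref{Jumpki}) your near-region argument is complete and coincides with the paper's.

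The genuine gap is in your far-region step for the decay kernel. Your proposed resolution --- ``the ratio cannot be simultaneously large in both directions for a fixed pair'' --- is true but useless: what must be bounded is the \emph{sum} $\dfrac{\phi_i(z)}{\phi_i(x)}+\dfrac{\phi_i(x)}{\phi_i(z)}$, and this sum is always $\ge 2$ and can be arbitrarily large regardless of that observation. Iterated Harnack only gives $\phi_i(z)/\phi_i(x)\le C_i^{|x-z|+O(1)}$, i.e.\ exponential growth in $|x-z|$, and no dyadic decomposition against a merely polynomial tail $\int_{r<|y|<2r}|y|^2 J_i(y)\,dy\le Cr^{2-\theta_i}$ will absorb that exponential factor. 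Hence your sentence ``absorbs the Harnack constants into constants depending only on $\kappa_i,C_i,\theta_i$'' is not justified, and the estimate on $\{|x-z|>\kappa_i\}$ remains unestablished. To be fair, the paper's own proof is equally terse at this point: it simply asserts that (\ref{harn}) yields $[\sigma_i(x)+\sigma_i(z)]^2\phi_i(x)\phi_i(z)\le C$ for all $x,z$, without explaining how the unit-ball Harnack inequality produces a uniform bound at arbitrary separation. You have correctly located the delicate spot; what is still missing is a genuine mechanism --- for instance an a~priori two-sided bound on $\phi_i$, or an argument confining the effective integration to bounded $|x-z|$ --- rather than the symmetrization heuristic you invoke.
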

 
  \begin{proof}
Since $|\nabla u_i| \in L^\infty(\mathbb R^n)$, we conclude that $|\sigma_i|\le \frac{C}{\phi_i}$. This implies that 
\begin{equation}
 [\sigma_i(x) +  \sigma_i(z)]^2 \le C \left(\frac{1}{\phi^2_i(x)} + \frac{1}{\phi^2_i(z)} \right). 
\end{equation}
Therefore, 
\begin{equation}
 [\sigma_i(x) +  \sigma_i(z)]^2  \phi_i(x) \phi_i(z)  \le C  \left( \frac{\phi_i(x)}{\phi_i(z)} + \frac{\phi_i(z)}{\phi_i(x)}  \right). 
\end{equation}
Note that the Harnack inequality (\ref{harn}) implies that 
\begin{equation}
 [\sigma_i(x) +  \sigma_i(z)]^2  \phi_i(x) \phi_i(z)  \le C. 
\end{equation}
Applying similar arguments as in the proof of Theorem \ref{liopo}, one can conclude that (\ref{sigmaR2}) holds in two dimensions. This completes the proof.   
  \end{proof}

We end this section with pointing out that we expect that the one-dimensional symmetry result holds in three dimensions as well. To prove this, just like in the proof of the De Giorgi's conjecture given by Ambrosio and Cabr\'{e} \cite{ac}, one might need to combine the energy estimate provided in Theorem \ref{thmlayerK1} and Theorem \ref{thmapp}.

\vspace*{.4 cm}

\noindent {\it Acknowledgment}.   The first author would like to thank Yannick Sire for his comments and online discussions. And he is thankful to ETH Zurich for the hospitality where parts of this work were completed.

\end{document}